\newtheorem{theorem}{Theorem}[section]
\newtheorem{lemma}[theorem]{Lemma}
\newtheorem{proposition}[theorem]{Proposition}
\theoremstyle{definition}
\theoremstyle{remark}
\newtheorem{remark}[theorem]{Remark}
\numberwithin{equation}{section}
\newcommand{\R}{{\mathbb R}}
\newcommand{\li}{\textnormal{li}}
\begin{document}

\title[Numbers free of large primes, explicitly]{Explicit estimates for the distribution of numbers free of large prime factors}

\author{Jared D. Lichtman}
\address{Department of Mathematics, Dartmouth College, Hanover, NH 03755}

\email{lichtman.18@dartmouth.edu}

\author{Carl Pomerance}
\address{Department of Mathematics, Dartmouth College, Hanover, NH 03755}
\email{carl.pomerance@dartmouth.edu}


\date{\today}



\begin{abstract}
There is a large literature on the asymptotic distribution of
numbers free of large prime factors, so-called {\it smooth} or
{\it friable} numbers.  But there is very little known about
this distribution that is numerically explicit.  In this paper
we follow the general plan for the saddle point argument of
Hildebrand and Tenenbaum, giving explicit and fairly
tight intervals in which the true count lies.  We give two
numerical examples of our method, and with the larger one, our interval is
so tight we can exclude the famous Dickman--de Bruijn asymptotic estimate
as too small and the Hildebrand--Tenenbaum main term as too large.
\end{abstract}

\maketitle


\section{Introduction}
For a positive integer $n>1$, denote by $P(n)$ the largest prime factor of $n$,
and let $P(1)=1$.
Let $\Psi(x,y)$ denote the number of $n\le x$ with $P(n)\le y$.
Such integers $n$ are known as $y$-smooth, or $y$-friable. 
Asymptotic estimates for $\Psi(x,y)$ are quite useful in many applications,
not least of which is in the analysis of factorization and discrete
logarithm algorithms.

One of the earliest results is due to Dickman \cite{D} in 1930, who gave 
an asympotic formula for $\Psi(x,y)$ in the case that $x$ is a fixed power of $y$. 
Dickman showed that
\begin{align}
\label{eq:dd}
\Psi(x,y) \sim x\rho(u)\qquad(y\to\infty,~x=y^u)
\end{align}
for every fixed $u\ge1$, where $\rho(u)$ is the ``Dickman--de Bruijn" function, defined to be the continuous solution of the delay differential equation
\begin{align*}
u\rho'(u) + \rho(u-1) & = 0\qquad(u>1),\\
\rho(u) & = 1\qquad(0\le u\le 1).
\end{align*}

There remain the questions of the error in the approximation \eqref{eq:dd},
and also the case when $u=\log x/\log y$ is allowed to grow with $x$ and $y$. 
In 1951, de Bruijn \cite{dB2} proved that
\begin{align*}
\Psi(x,y) = x\rho(u)\Big(1 + O_\varepsilon\Big(\frac{\log(1+u)}{\log y}\Big)\Big)
\end{align*}
holds uniformly for $x\ge 2$, $\exp\{(\log x)^{5/8+\varepsilon}\} < y\le x$,
 for any fixed $\varepsilon>0$. 
After improvements in the range of this result by Maier and Hensley,
Hildebrand \cite{H} showed that the de Bruijn estimate holds when
$\exp(\{(\log\log x)^{5/3+\varepsilon}\})\le y\le x$.



In 1986, Hildebrand and Tenenbaum \cite{HT1} provided a uniform estimate for 
$\Psi(x,y)$ for all $x\ge y\ge 2$, yielding an asymptotic formula when $y$ and 
$u$ tend to infinity.  The starting point for their method is an elementary
argument of Rankin \cite{R} from 1938, commonly known now as Rankin's ``trick".
For complex $s$,
define 
$$\zeta(s,y) = \sum_{\substack{n\ge1\\P(n)\le y}}n^{-s} = \prod_{p\le y}(1-p^{-s})^{-1}$$
(where $p$ runs over primes) as the partial Euler product of the Riemann zeta function $\zeta(s)$.  Then
for $0<\sigma<1$, we have
\begin{equation}
\label{eq:R}
\Psi(x,y)=\sum_{\substack{n\le x\\P(n)\le y}}1
\le\sum_{P(n)\le y}(x/n)^\sigma=x^\sigma\zeta(\sigma,y).
\end{equation}
Then $\sigma$ can be chosen optimally to minimize $x^\sigma\zeta(\sigma,y)$.

Let
$$\phi_j(s,y) = \frac{\partial^j}{\partial s^j}\log\zeta(s,y).$$
The function 
$$
\phi_1(s,y)=-\sum_{p\le y}\frac{\log p}{p^s-1}
$$
is especially useful since the solution $\alpha=\alpha(x,y)$ to $\phi_1(\alpha,y)+\log x=0$
gives the optimal $\sigma$ in \eqref{eq:R}.
We also denote $\sigma_j(x,y) = |\phi_j(\alpha(x,y),y)|$.

In this language, Hildebrand and Tenenbaum \cite{HT1} proved that the estimate
\begin{align*}
\Psi(x,y) = \frac{x^\alpha\zeta(\alpha,y)}{\alpha\sqrt{2\pi\sigma_2(x,y)}}\Big(1 + O\Big(\frac{1}{u} + \frac{\log y}{y}\Big)\Big)
\end{align*}
holds uniformly for $x\ge y\ge 2$. As suggested by this formula, quantities $\alpha(x,y)$ and $\sigma_2(x,y)$ are of interest in their own right, and were given uniform estimates which imply the formulae
\begin{align*}
\alpha(x,y) \sim \frac{\log(1+y/\log x)}{\log y}
\end{align*}
and
\begin{align*}
\sigma_2(x,y) \sim \Big(1+\frac{\log x}{y}\Big)\log x\log y,
\end{align*}
together which imply
\begin{align*}
\Psi(x,y) & \sim \frac{x^\alpha\zeta(\alpha,y)}{\sqrt{2\pi u}\log(y/\log x)}\qquad (\text{if } y/\log x\to \infty),\\
\Psi(x,y) & \sim \frac{x^\alpha\zeta(\alpha,y)}{\sqrt{2\pi y/\log y}}\qquad (\text{if } y/\log x\to 0).
\end{align*}

These formulae indicate that $\Psi(x,y)$ undergoes a ``phase change" when $y$ is of order $\log x$, 
see \cite{dB1}.
This paper concentrates on the range where $y$ is considerably larger, say $y > (\log x)^4$.

 The primary aim of this paper is to make the Hildebrand--Tenenbaum
 method explicit and 
so effectively construct an algorithm for obtaining good bounds for 
$\Psi(x,y)$.

\subsection{Explicit Results}
Beyond the Rankin upper bound
$\Psi(x,y)\le x^\alpha\zeta(\alpha,y)$, we have the explicit lower bound
\begin{align*}
\Psi(x,y) \ge x^{1-\log\log x/\log y}=\frac{x}{(\log x)^u}
\end{align*}
due to Konyagin and Pomerance \cite{KP}.
Recently Granville and Soundararajan \cite{GS} found an elementary improvement
of Rankin's upper bound, which they have graciously permitted us to include
in an appendix in this paper.  In particular, they show that
$$
\Psi(x,y)\le 1.39y^{1-\sigma}x^\sigma\zeta(\sigma,y)/\log x
$$
for every value of $\sigma\in[1/\log y,1]$, see Theorem \ref{thm:GS}.

In another direction, by relinquishing the goal of a compact formula, 
several authors have devised algorithms to compute bounds on $\Psi(x,y)$ for given $x,y$ as inputs. For example, using an accuracy parameter $c$, Bernstein \cite{Bern} created an algorithm to generate bounds $B^-(x,y)\le \Psi(x,y) \le B^+(x,y)$ with
\begin{align*}
\frac{B^-}{\Psi} \ge 1 - \frac{\log x}{c\log3/\log2} \quad\text{and}\quad \frac{B^+}{\Psi} \le 1 + \frac{2\log x}{c\log3/\log2},
\end{align*}
running in 
\begin{align*}
O\Big(\frac{y}{\log_2 y} + \frac{y\log x}{\log^2 y} + c\log x\log c\Big)
\end{align*}
time. Parsell and Sorenson \cite{PS} refined this algorithm to run in
\begin{align*}
O\Big(c\frac{y^{2/3}}{\log y} + c\log x\log c\Big)
\end{align*}
time, as well as obtaining faster and tighter bounds assuming the Riemann Hypothesis.  The largest example computed by this method was an approximation of
$\Psi(2^{255},2^{28})$.

As seen in 
Figure \ref{fig:ex}, the bounds presented in this paper far outshine best-known 
upper and lower bounds for the two examples presented. 
We also provide the main term estimates $x^\alpha\zeta(\alpha,y)/\alpha\sqrt{2\pi\sigma_2}$ 
from \cite{HT1} and $\rho(u)x$ from \cite{D} as points of reference. 
It is interesting that our estimates in the second example are closer to the
truth than are the Dickman--de Bruijn and Hildebrand--Tenenbaum main terms.
The second-named author has asked if 
$\Psi(x,y)\ge x\rho(u)$ holds in general for $x\ge2y\ge2$, see \cite[(1.25)]{G}.
This inequality is known for $u$ bounded and $x$ sufficiently large,
see the discussion in \cite[Section 9]{M}.

\begin{figure}[H]
  \caption{Examples.} 
  \[\begin{array}{c|cc} \label{fig:ex}
x &  10^{100} &  10^{500} \\
y &  10^{15} &  10^{35} \\
\hline\\
\text{KP}  &  1.786\cdot10^{84} &  1.857\cdot10^{456}\\
\text {R}  &  4.599\cdot10^{96} &  9.639\cdot10^{484}\\
\text{GS} & 5.350\cdot10^{95} & 6.596\cdot10^{483}\\
\text{DD} & 2.523\cdot10^{94} &  1.472\cdot10^{482} \\
\text{HT}& 2.652\cdot10^{94} & 1.5127\cdot10^{482} \\
\Psi^- &  2.330\cdot10^{94}&  1.4989\cdot10^{482} \\
\Psi^+ & 2.923\cdot10^{94} &  1.5118\cdot10^{482} \\
\end{array}\]
\end{figure}

Here, 
\begin{align*}
&\hbox{ KP is the Konyagin--Pomerance lower bound }x/(\log x)^u,\\
&\hbox{ R is the Rankin upper bound }x^\alpha\zeta(\alpha,y),\\
&\hbox{ GS is the Granville--Soundararajan upper bound
}1.39y^{1-\alpha}x^\alpha\zeta(\alpha,y)/\log x,\\
&\hbox{ DD is the Dickman--de Bruijn main term }\rho(u)x, \hbox{ and}\\
&\hbox{ HT is the
Hildebrand--Tenenbaum main term }x^\alpha\zeta(\alpha,y)/(\alpha\sqrt{2\pi\sigma_2}). 
\end{align*}

Our principal result, which benefits from some notation developed over the
course of the paper, is Theorem \ref{thm:main}.  It is via this theorem
that we were able to estimate $\Psi(10^{100},10^{15})$ and $\Psi(10^{500},10^{35})$ as in the table above.

\section{Plan for the paper}

The basic strategy of the saddle-point method relies on Perron's formula, 
which implies the identity
\begin{align*}
    \Psi(x,y) = \frac{1}{2\pi i}\int_{\sigma-i\infty}^{\sigma+i\infty} \zeta(s,y)\frac{x^s}{s}\;ds,
\end{align*}
for any $\sigma>0$.   It turns out that the best value of $\sigma$ to
use is $\alpha=\alpha(x,y)$ discussed in the Introduction.
We are interested in abridging the 
integral at a certain height $T$ and then approximating the contribution 
given by the tail. 
%
To this end, we have
\begin{equation}\label{eq:Per}
 \Psi(x,y) = \frac{1}{2\pi i}\int_{\alpha-iT}^{\alpha+iT} \zeta(s,y)\frac{x^s}{s}\;ds + \text{Error}.
\end{equation}
There is a change in behavior occurring in $\zeta(s,y)$ when $t$ is on the order $1/\log y$. In \cite{HT1} it is shown that
\begin{align}
\Big|\frac{\zeta(s,y)}{\zeta(\alpha,y)}\Big| & = \prod_{p\le y}\Big|\frac{1-p^{-\alpha}}{1-p^{-s}}\Big| = \prod_{p\le y}\Big(1+\frac{2(1-\cos(t\log p))}{p^\alpha(1-p^{-\alpha})^2} \Big)^{-1/2}\notag\\
& \le \exp\Big\{-\sum_{p\le y}\frac{1-\cos(t\log p)}{p^\alpha} \Big\}.
\label{eq:zetaineq}
\end{align}
Thus when $t$ is small (compared to $1/\log y$) the oscillatory terms are in 
resonance, and when $t$ is large the oscillatory terms should exhibit 
cancellation. This behavior suggests we should divide our range of 
integration into $|t|\le T_0$ and $T_0<|t|< T$, where $T_0\approx1/\log y$ is 
a parameter to be optimized.  

The contribution for $|t|\le T_0$ will
constitute a ``main terrm", and so we will try to estimate this part
very carefully.
In this range we forgo \eqref{eq:zetaineq} and
attack the integrand $\zeta(s,y)x^s/s$ 
directly. The basic idea is to expand $\phi(s,y)=\log\zeta(s,y)$ as a 
Taylor series in $t$. This approach, when carefully done, gives us fairly
close upper and lower bounds for the integral.  In our smaller example, 
the upper bound is less than 1\% higher than the lower bound,
and in the larger example, this is better by a factor of 20.  Considerably more
noise is encountered beyond $T_0$ and in the Error in \eqref{eq:Per}.

For the second range $T_{0}<|t|< T$, we focus on obtaining a satisfactory lower bound on the sum over primes,
\begin{align*}
    \sum_{p\le y}\frac{1-\cos(t\log p)}{p^\alpha}.
\end{align*}
Our strategy is to sum the first $L$ terms directly, and then obtain an analytic formula $W(y,w)$ to lower bound the remaining terms starting at some $w\ge L$, where essentially
\begin{align*}
    W(y,w) = \frac{y^{1-\alpha} - w^{1-\alpha}}{1-\alpha} + \text{error}.
\end{align*}

With an explicit version of Perron's formula, the Error in \eqref{eq:Per}
may be handled by
\begin{align*}
    \big|\text{Error}\big| & \le x^\alpha\sum_{P(n)\le y}\frac{1}{n^\alpha}\min\Big(1,\frac{1}{\pi T|\log(x/n)|}\Big)\\
    & \le x^\alpha \sum_{\substack{P(n)\le y\\T|\log(x/n)|>T^d}}\frac{1}{n^\alpha}\frac{1}{\pi T|\log(x/n)|} + \sum_{\substack{P(n)\le y\\T|\log(x/n)|\le T^d}}\Big(\frac{x}{n}\Big)^\alpha\\
    & \le \frac{x^\alpha\zeta(\alpha,y)}{\pi T^d} + e^{\alpha T^{d-1}}\Big[\Psi(xe^{T^{1-d}},y) - \Psi(xe^{-T^{1-d}},y)\Big].
\end{align*}
Here $d\approx\frac12$ is a parameter of our choosing, which 
we set to balance the two terms above. Thus the problem of bounding 
$|$Error$|$ is reduced to estimating the number of $y$-smooth integers in 
the ``short" interval $\big(xe^{-T^{1-d}}, xe^{T^{1-d}}\big]$.

This latter portion is better handled when $T$ is large, but the earlier
portion in the range $[T_0,T]$ is better handled when $T$ is small.
Thus, $T$ is numerically set to balance these two forces.

In our proofs we take full advantage of some recent calculations involving the prime-counting function $\pi(x)$
and the Chebyshev functions 
$$
\psi(x)=\sum_{p^m\le x}\log p,\quad \vartheta(x)=\sum_{p\le x}\log p,
$$
with $p$ running over primes and $m$ running over positive integers.
  As a corollary of the papers \cite{B1}, \cite{B2} of
B\"uthe we have the following excellent result.
\begin{proposition}\label{prop:epnt}
For $1427\le x\le 10^{19}$ we have
$$
.05\sqrt{x}\le x-\vartheta(x)\le 1.95\sqrt{x}.
$$
We have
\begin{align*}
\frac{|\vartheta(x)-x|}{x} <
\begin{cases}2.3\cdot10^{-8}, &\hbox{ when }x>10^{19},\\
1.2\cdot 10^{-8}, &\hbox{ when }x> e^{45},\\
1.2\cdot 10^{-9}, &\hbox{ when }x> e^{50},\\
2.9\cdot 10^{-10}, &\hbox{ when }x> e^{55}.
\end{cases}
\end{align*}
\end{proposition}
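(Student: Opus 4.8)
The plan is to derive Proposition~\ref{prop:epnt} from the cited work of Büthe by splitting into two regimes: the "moderate" range $1427\le x\le 10^{19}$, where we want the explicit two-sided bound $.05\sqrt x\le x-\vartheta(x)\le1.95\sqrt x$, and the "large" range $x>10^{19}$, where we want the relative bounds refining as $x$ grows. For the moderate range the key input is the result of \cite{B1}, \cite{B2} giving a verification of the Riemann Hypothesis up to a large height, which yields a Schoenfeld-type explicit inequality $|\psi(x)-x|< \tfrac{1}{8\pi}\sqrt x\log^2 x$ (or the stronger forms Büthe proves) valid for $x$ in exactly this kind of interval. First I would quote that inequality for $\psi$, then pass from $\psi$ to $\vartheta$ via the standard estimate $0\le\psi(x)-\vartheta(x)=\sum_{m\ge2}\vartheta(x^{1/m})\le 1.43\sqrt x$ for $x\ge121$ (one of the Rosser--Schoenfeld bounds), and combine. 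This reduces the claim to checking that $x-\psi(x)+\bigl(\psi(x)-\vartheta(x)\bigr)$ lies in $[.05\sqrt x,1.95\sqrt x]$; the upper bound $1.95\sqrt x$ will come out of $|\psi(x)-x|$ being tiny compared to $\sqrt x$ over most of the range together with the $\le1.43\sqrt x$ from prime powers, while the lower bound $.05\sqrt x$ needs a genuine lower bound on $\psi(x)-\vartheta(x)$, essentially $\psi(x)-\vartheta(x)\ge\vartheta(\sqrt x)\ge(1-o(1))\sqrt x$, which already exceeds $.05\sqrt x$ comfortably once $x$ is past a modest threshold; the hypothesis $x\ge1427$ is presumably exactly where the two-sided bound first holds, so I would note that Büthe's tables (or a short direct computation below $10^6$, say) cover the small cases.

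For the large range $x>10^{19}$ the mechanism is different: here one uses Büthe's effective zero-free region / explicit form of the prime number theorem with error term, in the shape $|\vartheta(x)-x|\le c_0\, x\exp(-c_1\sqrt{\log x})$ or the refined RH-conditional-up-to-height bounds that \cite{B2} makes unconditional for $x$ beyond the verification height. The numbers $2.3\cdot10^{-8}$, $1.2\cdot10^{-8}$, $1.2\cdot10^{-9}$, $2.9\cdot10^{-10}$ at the cut-points $x>10^{19}$, $x>e^{45}$, $x>e^{50}$, $x>e^{55}$ are then obtained by plugging those thresholds into whichever explicit bound Büthe proves and rounding up; because the error function is decreasing, the bound at the left endpoint of each interval dominates, so one only evaluates a constant at four points. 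Again I would dispose of the gap between prime powers and primes: $\psi(x)-\vartheta(x)\le1.43\sqrt x$ is negligible against $2.9\cdot10^{-10}x$ once $x>e^{55}$ (indeed once $x>10^{21}$ or so), so the $\vartheta$ bounds follow from the $\psi$ bounds with room to spare.

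The main obstacle is bookkeeping rather than mathematics: one must state precisely which of Büthe's several theorems is being invoked in each range — the paper \cite{B1} handles the RH verification that powers the $\psi$-bound up to $10^{19}$, while \cite{B2} upgrades Platt's computations and Büthe's own method to give the small relative errors past $10^{19}$ — and then chase the constants so that, after (i) converting $\psi\to\vartheta$, (ii) taking the worst case at each threshold, and (iii) rounding in the safe direction, the displayed numbers are actually valid. The only place requiring care beyond arithmetic is the lower bound $.05\sqrt x\le x-\vartheta(x)$ in the moderate range: since $x-\vartheta(x)$ can be close to $\psi(x)-\vartheta(x)$ minus a possibly negative $x-\psi(x)$, I would make sure the RH-type bound $|\psi(x)-x|$ is small enough relative to $\vartheta(\sqrt x)$ throughout $[1427,10^{19}]$ — which it is, by a wide margin — and verify the handful of smallest $x$ by direct summation. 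No step here is deep; it is an application and packaging of \cite{B1}, \cite{B2}.
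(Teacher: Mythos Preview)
Your approach for the range $x > e^{45}$ is essentially what the paper does: quote explicit bounds on $|\psi(x)-x|/x$ from B\"uthe \cite{B1} and pass to $\vartheta$ via a Rosser--Schoenfeld inequality for $\psi-\vartheta$. The paper uses $\psi(x)-\vartheta(x) < 1.02\sqrt{x}+3x^{1/3}$ rather than your $1.43\sqrt{x}$, but this is immaterial. For the sub-range $10^{19} < x \le e^{45}$ the paper does not use a zero-free-region estimate; it quotes inequality (7.4) of \cite{B1}, a partial-RH bound $|\vartheta(x)-x|/x < (\log x-2)(\log x)/(8\pi\sqrt{x})$ valid when $x/\log x \le H^2/4.92^2$ with $H=3\cdot10^{10}$ the verified height from Platt \cite{P}. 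Evaluated at $x=10^{19}$ this yields the $2.3\cdot10^{-8}$. Your alternative ``RH-conditional-up-to-height'' mechanism is the right one here; note that it already applies to $\vartheta$ directly, so no $\psi\to\vartheta$ conversion is needed in this sub-range.

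The genuine gap is in the moderate range $1427\le x\le 10^{19}$. The Schoenfeld-type inequality $|\psi(x)-x| < \tfrac{1}{8\pi}\sqrt{x}\log^2 x$ is far too weak: at $x=10^{19}$ it gives only $|\psi(x)-x| < 76\sqrt{x}$, nowhere near what is needed for $x-\vartheta(x)\le 1.95\sqrt{x}$. More fundamentally, your assertion that ``$|\psi(x)-x|$ [is] tiny compared to $\sqrt{x}$ over most of the range'' is false: it is known unconditionally that $\psi(x)-x = \Omega_\pm(\sqrt{x})$, so obtaining constants as tight as $0.05$ and $1.95$ requires tracking those order-$\sqrt{x}$ oscillations precisely via the explicit formula summed over the verified zeros. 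That is exactly the content of the main theorem of \cite{B2}, which establishes the two-sided bound on $x-\vartheta(x)$ \emph{directly}. Accordingly the paper does not derive the first assertion at all; it simply cites it as a main result of \cite{B2}. (Incidentally you have the roles of \cite{B1} and \cite{B2} reversed: \cite{B2} supplies the moderate-range $\vartheta$ bound, while \cite{B1} supplies both the partial-RH inequality (7.4) and the $\psi$ bounds for $x\ge e^{45}$.)
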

\begin{proof}
The first assertion is one of the main results in B\"uthe \cite{B2}.  Let $H$ be a number such that
all zeros of the Riemann zeta-function with imaginary parts in $[0,H]$ lie on the $1/2$-line.
Inequality (7.4) in B\"uthe \cite{B1} asserts that if $x/\log x\le H^2/4.92^2$ and $x\ge5000$, then
$$
\frac{|\vartheta(x)-x|}{x} < \frac{(\log x-2)\log x}{8\pi\sqrt{x}}.
$$
We can take $H=3\cdot 10^{10}$, see Platt \cite{P}.  Thus, we have the result in the range
$10^{19}\le x\le e^{45}$.  For $x\ge e^{45}$ we have from B\"uthe \cite{B1} that
$|\psi(x)-x|/x\le 1.118\cdot 10^{-8}$.  Further, we have (see \cite[(3.39)]{RS1}) for $x>0$,
$$
\psi(x)\ge\vartheta(x)>\psi(x)-1.02x^{1/2}-3x^{1/3}.
$$
(This result can be improved, but it is not important to us.)
Thus, for $x\ge e^{45}$ we have $|\vartheta(x)-x|/x \le 1.151\cdot 10^{-8}$, establishing our
result in this range.  For the latter two ranges we argue similarly, using $|\psi(x)-x|\le 1.165\cdot 10^{-9}$
when $x\ge e^{50}$ and $|\psi(x)-x|\le 2.885\cdot 10^{-10}$ for $x\ge e^{55}$, both of these inequalities
coming from \cite{B1}.
\end{proof}

We remark that there are improved inequalities at higher values of $x$, found in \cite{B1}
and \cite{FK}, which one would want to use if estimating $\Psi(x,y)$ for larger values of $y$
than we have done here.

\section{The main argument}

As in the Introduction, for complex $s$, define 
$$\zeta(s,y) = \sum_{\substack{n\ge1\\P(n)\le y}}n^{-s} = \prod_{p\le y}(1-p^{-s})^{-1},$$
which is the Riemann zeta function restricted to $y$-smooth numbers, and for $j\ge0$, let
$$\phi_j(s,y) = \frac{\partial^j}{\partial s^j}\log\zeta(s,y).$$
We have the explicit formulae,
\begin{align*}
    \phi_1(s,y) & = -\sum_{p\le y}\frac{\log p}{p^s-1},\\
    \phi_2(s,y) & = \sum_{p\le y}\frac{p^s\log^2 p}{(p^s-1)^2},\\
    \phi_3(s,y) & = -\sum_{p\le y}\frac{(p^{2s}+p^s)\log^3 p}{(p^s-1)^3},\\
    \phi_4(s,y) & = \sum_{p\le y}\frac{(p^{3s}+4p^{2s}+p^s)\log^4 p}{(p^s-1)^4},\\
    \phi_5(s,y) & = -\sum_{p\le y}\frac{(p^{4s}+11p^{3s}+11p^{2s}+p^s)\log^5 p}{(p^s-1)^5}.
\end{align*}
Note that for $y\ge2$, $\sigma>0$, $\phi_1(\sigma,y)$ is strictly increasing from $0$, so
there is a unique solution $\alpha=\alpha(x,y)>0$ to the equation 
$$\log x + \phi_1(\alpha,y) = 0.$$
Since we cannot exactly solve this equation, we shall assume any choice of $\alpha$ that we use
is a reasonable approximation to the exact solution, and we must take into account an upper bound
for the difference between our value and the exact value.
We denote 
$$
\phi_j=\phi_j(\alpha,y),\quad \sigma_j=|\phi_j|=(-1)^j\phi_j,\quad
B_j=B_j(t)=\sigma_jt^j/j!
$$
so that the Taylor series of $\phi(s,y) = \log\zeta(s,y)$ about $s=\alpha$ is
$$\phi(\alpha+it,y) = \sum_{j\ge0}\frac{\sigma_j}{j!}(-it)^j = \sum_{j\ge0}(-i)^jB_j.$$

Our first result, which is analogous to Lemma 10 in \cite{HT1}, sets the stage for our estimates.

\begin{lemma} \label{lm:10}
Let 
$0<d<1$ and $T>1$. We have that
\begin{align*}
\Big| \Psi(x,y) & - \frac{1}{2\pi i}\int_{\alpha-iT}^{\alpha+iT} \zeta(s,y)\frac{x^s}{s}\;ds\Big| \\
& \le \frac{x^\alpha\zeta(\alpha,y)}{\pi T^d} + e^{\alpha T^{d-1}}\Big[\Psi(xe^{T^{d-1}},y) - \Psi(xe^{-T^{d-1}},y)\Big].
\end{align*}
\end{lemma}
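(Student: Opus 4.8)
The plan is to start from the explicit (truncated) Perron formula, which gives
\[
\Psi(x,y) = \frac{1}{2\pi i}\int_{\alpha-iT}^{\alpha+iT}\zeta(s,y)\frac{x^s}{s}\,ds + R,
\qquad
|R| \le x^\alpha\sum_{P(n)\le y}\frac{1}{n^\alpha}\min\!\Big(1,\frac{1}{\pi T\,|\log(x/n)|}\Big),
\]
the standard bound for the error in replacing the step function counting $n\le x$ by the truncated integral. Writing $\Psi(x,y)=\sum_{P(n)\le y}\mathbf 1_{n\le x}=\sum_{P(n)\le y}(x/n)^\alpha\cdot\mathbf 1_{n\le x}(x/n)^{-\alpha}$ and applying Perron term by term, each term $\mathbf 1_{n\le x}$ minus its truncated integral is bounded by $\min(1,(\pi T|\log(x/n)|)^{-1})$ times $(x/n)^\alpha$ after absorbing $x^\alpha\zeta(\alpha,y)$. (Since $\alpha>0$ is the abscissa of absolute convergence region for $\zeta(s,y)$, interchanging sum and integral is justified.) So the whole task reduces to estimating that arithmetic sum.

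The key step is to split the sum over $n$ according to how close $\log(x/n)$ is to $0$, using the threshold $|\log(x/n)| = T^{-1}\cdot T^d$, i.e. $T|\log(x/n)|$ vs.\ $T^d$. On the "far" part where $T|\log(x/n)|>T^d$ we use $\min(1,(\pi T|\log(x/n)|)^{-1})\le (\pi T|\log(x/n)|)^{-1} < (\pi T^d)^{-1}$, and then $\sum_{P(n)\le y} (x/n)^\alpha \le x^\alpha\zeta(\alpha,y)$, giving the first term $x^\alpha\zeta(\alpha,y)/(\pi T^d)$. On the "near" part where $T|\log(x/n)|\le T^d$ we just use $\min(\ldots)\le 1$; here $|\log(x/n)|\le T^{d-1}$, so $xe^{-T^{d-1}}<n\le xe^{T^{d-1}}$, and $(x/n)^\alpha \le e^{\alpha T^{d-1}}$. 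Hence this part is at most $e^{\alpha T^{d-1}}$ times the number of $y$-smooth $n$ in the interval $(xe^{-T^{d-1}},\,xe^{T^{d-1}}]$, which is exactly $e^{\alpha T^{d-1}}\big[\Psi(xe^{T^{d-1}},y)-\Psi(xe^{-T^{d-1}},y)\big]$. Adding the two pieces yields the claimed bound.

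The one genuine subtlety — the "main obstacle" — is pinning down the exact constant in the Perron error term, namely that the truncation error in $\frac{1}{2\pi i}\int_{\alpha-iT}^{\alpha+iT}(x/n)^s\,\frac{ds}{s}$ compared to $\mathbf 1_{n\le x}$ is bounded by $\min\!\big(1,\frac{1}{\pi T|\log(x/n)|}\big)$ with the constant $\pi$ (not $2\pi$) in the denominator. This is a classical estimate but the sharp constant requires care; one can cite a standard reference (e.g.\ Tenenbaum's book, or the effective Perron formula used in the analytic number theory literature) or derive it by contour-shifting the integral $\frac{1}{2\pi i}\int_{\alpha-iT}^{\alpha+iT} a^s\frac{ds}{s}$ for $a=x/n$, separately for $a>1$ and $0<a<1$, and bounding the horizontal segments. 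Everything else is a routine dyadic-type split of a convergent Dirichlet series, with no analytic input beyond absolute convergence at $\sigma=\alpha>0$.
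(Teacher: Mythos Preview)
Your proof is correct and follows essentially the same route as the paper's: apply the explicit truncated Perron formula term by term (the paper cites Apostol \S11.12 for the constant $\pi$), then split the resulting sum over $y$-smooth $n$ at the threshold $|\log(x/n)|=T^{d-1}$, bounding the far part by $x^\alpha\zeta(\alpha,y)/(\pi T^d)$ and the near part by $e^{\alpha T^{d-1}}$ times the count of $y$-smooth integers in $(xe^{-T^{d-1}},\,xe^{T^{d-1}}]$. The only cosmetic difference is that the paper justifies the sum--integral interchange by noting that $\zeta(s,y)$ is a finite Euler product (hence trivially uniformly convergent), whereas you invoke absolute convergence for $\sigma>0$; both are valid.
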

\begin{proof}
We have
\begin{align*}
    \frac{1}{2\pi i}\int_{\alpha-iT}^{\alpha+iT} \zeta(s,y)\frac{x^s}{s}\;ds &
= \frac{1}{2\pi i}\int_{\alpha-iT}^{\alpha+iT} \sum_{P(n)\le y} \Big(\frac{x}{n}\Big)^s\frac{ds}{s}\\
&= \sum_{P(n)\le y}\frac{1}{2\pi i}\int_{\alpha-iT}^{\alpha+iT} \Big(\frac{x}{n}\Big)^s\frac{ds}{s},
\end{align*}
where the interchange of sum and integral is justified since $\zeta(s,y)$ is a finite product, hence uniformly convergent as a sum. 

By Perron's formula (see \cite[\S 11.12]{A}), we have
\begin{align*}
    \Big|\frac{1}{2\pi i}\int_{\alpha-iT}^{\alpha+iT} \Big(\frac{x}{n}\Big)^s\frac{ds}{s}\Big| \le \frac{(x/n)^\alpha}{\min\Big(1,\pi T|\log(x/n)|\Big)}\qquad\text{ if }n > x,\\
    \Big|1-\frac{1}{2\pi i}\int_{\alpha-iT}^{\alpha+iT} \Big(\frac{x}{n}\Big)^s\frac{ds}{s}\Big| \le \frac{(x/n)^\alpha}{\min\Big(1,\pi T|\log(x/n)|\Big)}\qquad\text{ if }n\le x.
\end{align*}
Together these imply
\begin{align*}
    \Big|\Psi(x,y) -& \frac{1}{2\pi i}\int_{\alpha-iT}^{\alpha+iT} \zeta(s,y)\frac{x^s}{s}\;ds\Big|  \le x^\alpha\sum_{P(n)\le y}\frac{1}{n^\alpha}\min\Big(1,\frac{1}{\pi T|\log(x/n)|}\Big)\\
    & \le x^\alpha \sum_{\substack{P(n)\le y\\ |\log(x/n)|>T^{d-1}}}\frac{1}{n^\alpha}\frac{1}{\pi T|\log(x/n)|} + x^\alpha\sum_{\substack{P(n)\le y\\|\log(x/n)|\le  T^{d-1}}}\frac{1}{n^\alpha}\\
    & \le \frac{x^\alpha\zeta(\alpha,y)}{\pi T^d} + e^{\alpha T^{d-1}}\Big[\Psi(xe^{T^{d-1}},y) - \Psi(xe^{-T^{d-1}},y)\Big].
\end{align*}
This completes the proof.
\end{proof}

In using this result we have the problems of performing the integration from $\alpha-iT$ to $\alpha+iT$ and
estimating the number of $y$-smooth integers in the interval $\big(xe^{-T^{d-1}},xe^{T^{1-d}}\big]$.
We turn first to the integral evaluation.

Recall that $B_j=B_j(t) = \sigma_j(x,y) t^j/j!$ and let $B_1^*=B_1^*(t)=t\log x -B_1(t)$. Note that $B_1^*=0$ if $\alpha$ is chosen perfectly.

\begin{lemma}\label{lm:11prep}
For $s=\alpha+it$,  we have
\begin{align*}
&\mathfrak{Re}\Big\{\zeta(s,y)\frac{x^s}{s}\Big\} = \\
&~\frac{x^\alpha\zeta(\alpha,y)}{\alpha^2+t^2}\big(\alpha\cos(B_3+B_1^*+b_5) + t\sin (B_3+B_1^*+b_5)\big)\exp\big\{- B_2+B_4 + a_5\big\},
\end{align*}
where $a_5,b_5$ are real numbers, depending on the choice of $t$, with 
$|a_5+ib_5|\le B_5(t)$.
\end{lemma}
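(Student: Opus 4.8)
The plan is to start from the defining Euler-product identity $\zeta(s,y)=\zeta(\alpha,y)\exp\{\phi(s,y)-\phi(\alpha,y)\}$ and expand $\phi(\alpha+it,y)$ in its Taylor series about $s=\alpha$, which converges since $\zeta(\cdot,y)$ is entire-like (a finite product of geometric factors, analytic for $\mathfrak{Re}(s)>0$). Writing $\phi(\alpha+it,y)=\sum_{j\ge0}(-i)^jB_j$, the $j=0$ term contributes nothing to $\phi(s,y)-\phi(\alpha,y)$, the $j=1$ term is $-iB_1=-it\sigma_1$, and I would add and subtract $it\log x$ to convert $-iB_1$ into $-it\log x + iB_1^*$ with $B_1^*=t\log x-B_1$. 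Collecting real and imaginary parts of the remaining terms: since $\sigma_j=(-1)^j\phi_j$ the real part of $\sum_{j\ge2}(-i)^jB_j$ is $-B_2+B_4-B_6+\cdots$ and the imaginary part is $B_3-B_5+B_7-\cdots$. I would then split off $j=2,3,4$ explicitly and bundle the tail $j\ge5$ into a single complex error term $a_5+ib_5$, where $a_5=\mathfrak{Re}\sum_{j\ge5}(-i)^jB_j$ and $b_5=\mathfrak{Im}\sum_{j\ge5}(-i)^jB_j$; the bound $|a_5+ib_5|\le\sum_{j\ge5}|B_j|$ needs to be sharpened to $|a_5+ib_5|\le B_5(t)$, which is the one genuinely non-formal point and I address it below. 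The upshot at this stage is
\[
\zeta(s,y)\frac{x^s}{s}=\frac{x^\alpha\zeta(\alpha,y)}{s}\,x^{it}\exp\{-it\log x\}\exp\{iB_1^*+iB_3-iB_5+\cdots\}\exp\{-B_2+B_4+a_5\},
\]
and the factor $x^{it}e^{-it\log x}=1$ cancels cleanly.

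Next I would handle the algebraic factor $1/s=1/(\alpha+it)$ by rationalizing: $1/(\alpha+it)=(\alpha-it)/(\alpha^2+t^2)$. Combining this with the purely imaginary exponential $\exp\{i(B_3+B_1^*+b_5)\}=\cos(B_3+B_1^*+b_5)+i\sin(B_3+B_1^*+b_5)$ and taking the real part of the product $(\alpha-it)\big(\cos\theta+i\sin\theta\big)$ with $\theta=B_3+B_1^*+b_5$ gives $\alpha\cos\theta+t\sin\theta$, which is exactly the bracketed trigonometric expression in the statement. The remaining real factor $\exp\{-B_2+B_4+a_5\}$ multiplies through, and since $x^\alpha\zeta(\alpha,y)/(\alpha^2+t^2)$ is real and positive it passes outside the real-part operation. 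Assembling these pieces yields the claimed formula verbatim.

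The main obstacle is justifying that the combined tail error satisfies $|a_5+ib_5|\le B_5(t)$ rather than merely the triangle-inequality bound $\sum_{j\ge5}\sigma_j t^j/j!$. This is the analogue of the corresponding estimate in Lemma 10 of Hildebrand--Tenenbaum, and the key input is monotonicity of the coefficients: one shows $\sigma_{j+1}t/( (j+1)\sigma_j)$ stays below a fixed ratio $<1$ for the relevant range of $t$ (here $|t|\le T_0\approx 1/\log y$), using the explicit formulae for $\phi_j$ — each $\sigma_j$ is a sum over $p\le y$ of $\log^j p$ times a bounded rational function of $p^\alpha$, so $\sigma_{j+1}/\sigma_j$ is controlled by $\max_{p\le y}\log p=\log y$ up to constants. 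Once the successive terms $B_j(t)$ decay geometrically, the alternating-sign structure of the real and imaginary parts lets one dominate the entire tail $\sum_{j\ge5}(-i)^jB_j$ in modulus by its leading term $B_5(t)$, via the standard estimate that an alternating series with decreasing terms is bounded by its first term (applied separately, or jointly via a geometric-series comparison, to the real and imaginary components). I would state this monotonicity as a short preliminary lemma — or invoke the bound on $\sigma_{j+1}/\sigma_j$ that the paper presumably establishes elsewhere — and then the rest of the proof is the bookkeeping of real and imaginary parts sketched above.
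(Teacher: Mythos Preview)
Your algebraic bookkeeping --- rationalizing $1/(\alpha+it)$, cancelling $x^{it}e^{-it\log x}=1$, and extracting the real part of $(\alpha-it)e^{i\theta}$ --- is correct and matches the paper exactly. The divergence is entirely in how you control the remainder $a_5+ib_5$, and there your argument has a genuine gap.

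The paper does \emph{not} sum the full Taylor series and then bound the tail. It stops the expansion of $g(t):=\phi(\alpha+it,y)$ at order four and takes the Taylor remainder: one writes
\[
\phi(\alpha+it,y)=B_0-iB_1-B_2+iB_3+B_4+R_5,\qquad R_5=\frac{i^5}{4!}\int_0^t(t-\tau)^4\,\phi_5(\alpha+i\tau,y)\,d\tau
\]
(the paper states this with a single intermediate point $\xi$; for a complex-valued function the integral form is the rigorous version, and it yields the same estimate). One then sets $a_5+ib_5:=R_5$. The bound $|a_5+ib_5|\le B_5(t)$ follows in one line from the Dirichlet-series identity
\[
\phi_5(s,y)=-\sum_{p\le y}\sum_{m\ge1}m^{4}(\log p)^5\,p^{-ms},
\]
since every term has absolute value $m^4(\log p)^5 p^{-m\alpha}$ on the line $\mathfrak{Re}(s)=\alpha$, so $|\phi_5(\alpha+i\tau,y)|\le|\phi_5(\alpha,y)|=\sigma_5$ for all real $\tau$. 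No hypothesis on $t$, no information about $\sigma_j$ for $j\ge6$, and no monotonicity of $(B_j)$ is needed.

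Your route, by contrast, breaks down at the very step you flag. First, the full series $\sum_{j\ge0}(-i)^jB_j$ has radius of convergence equal to the distance from $\alpha$ to the nearest singularity of $\phi(\cdot,y)$, and $\zeta(s,y)$ has a pole at $s=0$, so that radius is only $\alpha<1$; the lemma as stated carries no restriction on $t$. Second, even where the series converges and $(B_j)_{j\ge5}$ is decreasing, the alternating-series bound applied to real and imaginary parts separately gives $|\mathfrak{Im}|\le B_5$ and $|\mathfrak{Re}|\le B_6$, hence only $|a_5+ib_5|\le\sqrt{B_5^2+B_6^2}$, which is strictly larger than $B_5$ whenever $t\ne0$; a geometric comparison gives $B_5/(1-r)$, again too large. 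Third, the ratio control $\sigma_{j+1}/\sigma_j\le C\log y$ that you invoke is not established anywhere in the paper and is not needed. Replace your tail-sum step with the Taylor-remainder argument and the proof goes through cleanly for all $t$ with the exact constant $B_5(t)$.
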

\begin{proof}
 We expand $\phi(\alpha+it,y)=\log\zeta(\alpha+it,y)$ in a Taylor series around $t=0$. There exists some real
 $\xi$ between 0 and $t$ such that
\begin{align*}
\phi(\alpha+it,y) & = \phi(\alpha,y) + it\phi_1 - \frac{t^2}{2}\phi_2 - \frac{it^3}{3!}\phi_3 + \frac{t^4}{4!}\phi_4-i\frac{t^5}{5!}(\alpha+i\xi,y)\\
& = B_0 - iB_1 - B_2 + iB_3 + B_4-i\frac{t^5}{5!}\phi_5(\alpha+i\xi,y).
\end{align*}
Since $\zeta(s,y) = \exp(\phi(s,y))$, we obtain
\begin{align*}
\zeta(s,y)\frac{x^s}{s} & = \frac{\zeta(\alpha,y)x^{\alpha}}{\alpha+it}
\exp\Big\{ it\log x - iB_1 - B_2 + iB_3 + B_4+i\frac{t^5}{5!}\phi_5(\alpha+i\xi,y)\Big\}\\
& = \frac{x^\alpha\zeta(\alpha,y)}{\alpha+it}\exp\Big\{- B_2 +B_4 + i(B_1^*+B_3 )+ i\frac{t^5}{5!}\phi_5(\alpha+i\xi,y)\Big\}.
\end{align*}
Letting $i\phi_5 (\alpha+i\xi)t^5/5! = a_5 + b_5i$, we have
\begin{align*}
&~\zeta(s,y)\frac{x^s}{s}=\\
&  \frac{x^\alpha\zeta(\alpha,y)}{\alpha^2{+}t^2}(\alpha{-}it)\big(\cos(B_1^*+B_3+b_5) + i\sin (B_1^*+B_3+b_5)\big)\exp\big\{- B_2 +B_4+ a_5\big\},
\end{align*}
and taking the real part gives the result.
\end{proof}



 The main contribution to the integral in Lemma \ref{lm:10} turns out to 
come from the interval $[-T_0,T_0]$, where $T_0$ is fairly small.
 We have
 \[
 \frac1{2\pi i}\int_{\alpha-iT_0}^{\alpha+iT_0}\zeta(s,y)\frac{x^s}s\;ds
= \frac1{2\pi}\int_{-T_0}^{T_0}\zeta(\alpha+it,y)\frac{x^{\alpha+it}}{\alpha+it}\;dt.
\]
Note that the integrand, written as a Taylor series around $s=\alpha$, has real coefficients, so the real part
is an even function of $t$ and the imaginary part is an odd function.  Thus, the integral is
real, and its value is double the value of the integral on $[0,T_0]$.

Consider the cosine, sine combination in Lemma \ref{lm:11prep}:
$$
f(t,v):=\alpha\cos(B_3(t)+v)+t\sin(B_3(t)+v),
$$
and let
$$
v_0(t)=|B_1^*(t)|+B_5(t).
$$
We have, for each value of $t$, the constraint that $|v|\le v_0(t)$.
The partial derivative of $f(t,v)$ with respect to $v$ is zero when
$\arctan(t/\alpha)-B_3(t)\equiv0\pmod{\pi}$.  Let 
$$
u(t)=\arctan(t/\alpha)-B_3(t).
$$
If $u(t)\not\in[-v_0(t),v_0(t)]$, then $f(t,v)$ is monotone in $v$
on that interval; otherwise it has a min or max at $u(t)$.
Let $T_3,T_2,T_1,T_0$ be defined,
respectively, as the least positive solutions of the equations
$$
u(t)=v_0(t),\quad
u(t)=-v_0(t),\quad
u(t)+\pi=v_0(t),\quad
u(t)+\pi=-v_0(t).
$$
Then $0<T_3<T_2<T_1<T_0$.  
We have the following properties for $f(t,v)$:
\begin{enumerate}
\item For $t$ in the interval $[0,T_3]$ we have
$f(t,v)$ increasing for $v\in[-v_0(t),v_0(t)]$, so that
$$f(t,-v_0(t))\le f(t,v)\le f(t,v_0(t)).$$  
\item
For $t$ in the interval $[T_3,T_2]$,
we have $f(t,v)$ increasing for $-v_0(t)\le v\le u(t)$ and then
decreasing for $u(t)\le v\le v_0(t)$.  Thus,
$$\min\{f(t,-v_0(t)) , f(t,v_0(t)) \} \le f(t,v) \le f(t, u(t)).$$
\item
For $t\in[T_2,T_1]$,
$f(t,v)$ is decreasing for $v\in[-v_0(t),v_0(t)]$, so that
$$f(t,v_0(t))\le f(t,v)\le f(t,-v_0(t)).$$
\item
For $t\in[T_1,T_0]$, we have $f(t,v)$ decreasing for $v\in[-v_0(t),u(t)+\pi]$
and increasing for $v\in[u(t)+\pi,v_0(t)]$; that is,
$$f(t,u(t)+\pi) \le f(t,v) \le \max\{f(t,-v_0(t)) , f(t,v_0(t)) \}.$$
\end{enumerate}
Note too that $f(t,v)$
has a sign change from positive to negative in the interval $[T_2,T_1]$.
Let $Z^-,Z^+$ be, respectively, the least positive roots of $f(t,v(t))=0$,
$f(t,-v(t))=0$. 

Let $I_0^+$ be an upper bound for the function appearing in Lemma \ref{lm:11prep} on $[0,T_0]$ using
$|a_5|, |b_5|\le B_5$ and the above facts about $f(t,v)$,
and let $I_0^-$ be the corresponding lower bound.  We choose $a_5=B_5$ in
$I_0^+$ when the cos, sin combination is positive, and $a_5=-B_5$ when it
is negative.  For $I_0^-$, we choose $a_5$ in the reverse way.

Let 
\begin{equation}
\label{eq:J0}
J_0^+=\int_0^{T_0}I_0^+(t)\;dt,\quad J_0^-=\int_0^{T_0}I_0^-(t)\;dt.
\end{equation}
We thus have the following result, which is our analogue of Lemma 11 in \cite{HT1}.
\begin{lemma}\label{lm:11}
We have
\[
\frac{x^\alpha\zeta(\alpha,y)}\pi J_0^-\le
  \frac1{2\pi i}\int_{\alpha-iT_0}^{\alpha+iT_0}\zeta(s,y)\frac{x^{s}}{s}\;ds
 \le\frac{x^\alpha\zeta(\alpha,y)}\pi J_0^+.
 \]
 \end{lemma}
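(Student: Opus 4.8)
The plan is to start from the identity
\[
\frac1{2\pi i}\int_{\alpha-iT_0}^{\alpha+iT_0}\zeta(s,y)\frac{x^s}s\,ds
=\frac1{2\pi}\int_{-T_0}^{T_0}\zeta(\alpha+it,y)\frac{x^{\alpha+it}}{\alpha+it}\,dt,
\]
and to use the parity observation already made in the text: writing $\phi(\alpha+it,y)=\log\zeta(\alpha+it,y)$ as a Taylor series about $s=\alpha$ with real coefficients $\sigma_j$, the integrand has real part even in $t$ and imaginary part odd in $t$. Hence the integral equals twice the integral of $\mathfrak{Re}\{\zeta(\alpha+it,y)x^{\alpha+it}/(\alpha+it)\}$ over $[0,T_0]$, and in particular is real. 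This reduces matters to bounding $\int_0^{T_0}\mathfrak{Re}\{\zeta(s,y)x^s/s\}\,dt$ above and below.

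Next I would invoke Lemma \ref{lm:11prep}, which gives the exact formula
\[
\mathfrak{Re}\Big\{\zeta(s,y)\frac{x^s}s\Big\}
=\frac{x^\alpha\zeta(\alpha,y)}{\alpha^2+t^2}\,f(t,B_1^*+b_5)\,\exp\{-B_2+B_4+a_5\},
\]
where $f(t,v)=\alpha\cos(B_3(t)+v)+t\sin(B_3(t)+v)$ and $|a_5+ib_5|\le B_5(t)$. The only quantities here not pinned down by $t$ are $a_5$ and $b_5$, subject to $|a_5|\le B_5(t)$, $|b_5|\le B_5(t)$. For the upper bound I would replace $f(t,B_1^*+b_5)$ by its maximum over the admissible window $|v-B_1^*|\le B_5$, i.e. over $v\in[-v_0(t),v_0(t)]$ with $v_0(t)=|B_1^*(t)|+B_5(t)$; this is exactly the case analysis in items (1)--(4) of the preceding discussion, keyed off whether the critical point $u(t)$ or $u(t)+\pi$ falls inside the window, with the breakpoints $T_3<T_2<T_1<T_0$. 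Simultaneously I would bound the exponential factor by choosing $a_5=+B_5$ where the $f$-combination is positive and $a_5=-B_5$ where it is negative (so that $|f|e^{a_5}$ is maximized), and for the lower bound make both choices in the opposite sense. By the very definition of $I_0^+(t)$ and $I_0^-(t)$ in the text, these pointwise maximum/minimum values are precisely $\tfrac{x^\alpha\zeta(\alpha,y)}{\pi}I_0^{\pm}(t)$ up to the constant; integrating the pointwise inequalities over $[0,T_0]$ and using the definitions \eqref{eq:J0} of $J_0^{\pm}$, then multiplying the doubled integral by $1/(2\pi)$, yields
\[
\frac{x^\alpha\zeta(\alpha,y)}{\pi}J_0^-\le
\frac1{2\pi i}\int_{\alpha-iT_0}^{\alpha+iT_0}\zeta(s,y)\frac{x^s}s\,ds
\le\frac{x^\alpha\zeta(\alpha,y)}{\pi}J_0^+,
\]
which is the claim. (The integrals defining $J_0^{\pm}$ converge trivially since the integrand is bounded and $T_0<\infty$; the sign-change roots $Z^{\pm}$ only matter for making the exponential-factor choice consistent across $[0,T_0]$, not for convergence.)

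The step I expect to be the genuine content — rather than bookkeeping — is verifying the four monotonicity assertions about $f(t,v)$ on the respective subintervals, since everything else is a formal consequence of Lemma \ref{lm:11prep} and the definitions. Concretely, $\partial f/\partial v=-\alpha\sin(B_3+v)+t\cos(B_3+v)$, which vanishes exactly when $B_3(t)+v\equiv\arctan(t/\alpha)\pmod\pi$, i.e. when $v\equiv u(t)\pmod\pi$ with $u(t)=\arctan(t/\alpha)-B_3(t)$; the sign of $\partial f/\partial v$ on either side of such a point is fixed because $\sqrt{\alpha^2+t^2}\cos(B_3+v-\arctan(t/\alpha))$ is the value of $f$, so $f$ behaves like a single shifted cosine amplitude in $v$. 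One then has to check that for $t\in[0,T_3]$ neither $u(t)$ nor $u(t)+\pi$ lies in $[-v_0(t),v_0(t)]$ and $f$ is increasing there; that on $[T_3,T_2]$ only $u(t)$ enters (giving an interior max); that on $[T_2,T_1]$ again neither critical value lies in the window but now $f$ is decreasing (which forces the sign change of $f$, hence the roots $Z^{\pm}$); and that on $[T_1,T_0]$ only $u(t)+\pi$ enters (giving an interior min). These follow from the defining equations of $T_3,T_2,T_1,T_0$ together with the fact that $u(t)$ is continuous and $u(0)=0$, $v_0(0)>0$, so that $u-v_0$ starts negative and the relevant sign changes occur in the stated order $T_3<T_2<T_1<T_0$ — provided one also checks that $u(t)$ and $v_0(t)$ are well enough behaved (monotone, or at least that no earlier crossing occurs) on $[0,T_0]$, which in the regime $y>(\log x)^4$ under consideration is where one uses the explicit size estimates for $\sigma_2,\sigma_3,\sigma_5$ and $B_1^*$ developed elsewhere in the paper.
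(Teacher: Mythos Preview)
Your proposal is correct and matches the paper's approach exactly: the paper gives no separate proof, treating the lemma as immediate from the parity reduction, Lemma~\ref{lm:11prep}, the case analysis (1)--(4) for $f(t,v)$, and the very definitions of $I_0^\pm$, $J_0^\pm$---which is precisely what you unpack. One small correction in your closing paragraph: $v_0(0)=0$ (not $>0$), and for small $t>0$ one has $u(t)-v_0(t)>0$ since $u(t)\sim t/\alpha$ dominates $v_0(t)\sim t\,|\sigma_1-\log x|$, so $u-v_0$ starts positive and first meets zero at $T_3$ from above; also, the exact window $|v-B_1^*|\le B_5$ is contained in but not equal to $[-v_0,v_0]$, the paper simply adopting the larger symmetric window in defining $I_0^\pm$.
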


 In order to estimate the integral in Lemma \ref{lm:10}
when $|t|>T_0$ we must know something about prime sums to $y$.
\begin{lemma}
\label{lm:8prelim}
We have
\[
\Big| \int_{\alpha+iT_0}^{\alpha+iT} \zeta(s,y)\frac{x^s}{s}\;ds\Big|
\le x^\alpha\zeta(\alpha,y)J_1,
\]
where
\[
J_1:=\int_{T_0}^T\exp\big(-W(y,1,t)\big)\;\frac{dt}{\sqrt{\alpha^2+t^2}}
\]
and
\begin{equation}
\label{def:W}
W(v,w,t) := \sum_{w<p\le v}\frac{1-\cos(t\log p)}{p^\alpha}.
\end{equation}
\end{lemma}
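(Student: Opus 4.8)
The plan is to bound the integral over $[\alpha+iT_0,\alpha+iT]$ by estimating the integrand $\zeta(s,y)x^s/s$ in absolute value and integrating. First I would use the triangle inequality to pull the absolute value inside the integral:
\[
\Big|\int_{\alpha+iT_0}^{\alpha+iT}\zeta(s,y)\frac{x^s}{s}\,ds\Big|\le\int_{T_0}^{T}\Big|\zeta(\alpha+it,y)\frac{x^{\alpha+it}}{\alpha+it}\Big|\,dt
=\int_{T_0}^{T}\frac{|\zeta(\alpha+it,y)|}{\sqrt{\alpha^2+t^2}}\,x^\alpha\,dt,
\]
where I have parametrized $s=\alpha+it$, so that $ds=i\,dt$, $|x^{s}|=x^{\alpha}$, and $|s|=|\alpha+it|=\sqrt{\alpha^2+t^2}$.

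Next I would invoke the inequality \eqref{eq:zetaineq} established in \cite{HT1} and quoted in the excerpt, namely
\[
\Big|\frac{\zeta(\alpha+it,y)}{\zeta(\alpha,y)}\Big|\le\exp\Big\{-\sum_{p\le y}\frac{1-\cos(t\log p)}{p^\alpha}\Big\}=\exp\big(-W(y,1,t)\big),
\]
using the definition \eqref{def:W} of $W$ with $w=1$, since the sum over $1<p\le y$ is exactly the sum over all primes $p\le y$. Substituting this bound into the integral gives
\[
\Big|\int_{\alpha+iT_0}^{\alpha+iT}\zeta(s,y)\frac{x^s}{s}\,ds\Big|\le x^\alpha\zeta(\alpha,y)\int_{T_0}^{T}\exp\big(-W(y,1,t)\big)\,\frac{dt}{\sqrt{\alpha^2+t^2}}=x^\alpha\zeta(\alpha,y)J_1,
\]
which is precisely the claimed inequality.

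There is essentially no obstacle here: the lemma is a direct consequence of the triangle inequality together with the already-quoted bound \eqref{eq:zetaineq}. The only points requiring a word of care are the parametrization of the line segment and the observation that $W(y,1,t)$ as defined in \eqref{def:W} agrees with the prime sum in the exponent of \eqref{eq:zetaineq} (every prime $p$ satisfies $p>1$). The substantive work is deferred to later lemmas, where one must produce a usable lower bound for $W(y,1,t)$ — and more generally for $W(y,w,t)$ — so that the integral $J_1$ can actually be estimated; that is where the real difficulty of the $|t|>T_0$ range lies, as flagged in the paper's plan.
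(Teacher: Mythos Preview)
Your proof is correct and follows essentially the same approach as the paper's: bound the integrand in absolute value via the triangle inequality and apply the inequality \eqref{eq:zetaineq} for $|\zeta(\alpha+it,y)/\zeta(\alpha,y)|$. The paper's version differs only in that it re-derives \eqref{eq:zetaineq} on the spot (citing equations (3.14) and (3.17) of \cite{HT1}) and leaves the triangle-inequality step implicit, whereas you cite the inequality as already quoted and spell out the parametrization; these are cosmetic differences.
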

\begin{proof}
For $0\le v\le 1 <t$, equation (3.14) in \cite{HT1} states that
$$(1+4vt/(t-1)^2)^{-1} \le \exp\{-4v/t\}.$$
Applied to (3.17) in \cite{HT1} with $v=(1-\cos(t\log p))/2$, we have that
\begin{align}
\label{eq:ineq317}
\begin{split}
\Big|\frac{\zeta(s,y)}{\zeta(\alpha,y)}\Big| & = \prod_{p\le y}\Big|\frac{1-p^{-\alpha}}{1-p^{-s}}\Big| = \prod_{p\le y}\Big(1+\frac{2(1-\cos(t\log p))}{p^\alpha(1-p^{-\alpha})^2} \Big)^{-1/2}\\
& \le \exp\Big\{-\sum_{p\le y}\frac{1-\cos(t\log p)}{p^\alpha} \Big\}.
\end{split}
\end{align}
This completes the proof.
\end{proof}
 
Our goal now is to find a way to estimate $W(v,w,t)$.          
 The following result is analogous to Lemma 6 in \cite{HT1}.
 \begin{lemma} \label{lm:6}
Let $s$ be a complex number, let $1<w<v$, and define
\begin{align*}
    F_s(v,w):= \sum_{w<p\le v}\frac{\log p}{p^s} - \frac{v^{1-s}-w^{1-s}}{1-s}.
\end{align*}
(i) If $ v\le 10^{19}$ we have
\begin{align*}
|F_s(v,w)| & \le 2(v^{1/2-\alpha} + w^{1/2-\alpha}) + 2|s|\frac{w^{1/2-\alpha}-v^{1/2-\alpha}}{\alpha-1/2}.
\end{align*}
(ii) If $10^{19}\le w\le v$ we have
\begin{align*}
|F_s(v,w)| & \le \varepsilon_w\Big(v^{\beta} + w^{\beta} + |s|\frac{v^{\beta}-w^{\beta}}{\beta}\Big),
\end{align*}
where $\beta=1-\alpha$ and 
\[
\varepsilon_w=\begin{cases}
2.3\cdot  10^{-8},&~~w\in(10^{19},e^{45}],\\
1.2\cdot  10^{-8},&~~w\in(e^{50},e^{55}],\\
1.2\cdot  10^{-9},&~~w\in(e^{50},e^{55}],\\
2.9\cdot  10^{-10},&~~w>e^{55}.
\end{cases}
\]
\end{lemma}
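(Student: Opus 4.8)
\textbf{Proof plan for Lemma \ref{lm:6}.}
The strategy is partial summation against the prime-counting data supplied by Proposition \ref{prop:epnt}, writing $\sum_{w<p\le v}(\log p)/p^s$ as a Stieltjes integral with respect to $\vartheta$ and comparing it to the corresponding integral with $\vartheta(t)$ replaced by $t$. Concretely, I would start from
\[
\sum_{w<p\le v}\frac{\log p}{p^s}=\int_w^v \frac{\rd\vartheta(t)}{t^s}
=\int_w^v\frac{\rd t}{t^s}+\int_w^v\frac{\rd(\vartheta(t)-t)}{t^s},
\]
and note that the first integral is exactly $(v^{1-s}-w^{1-s})/(1-s)$, so that $F_s(v,w)$ equals the second integral. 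Integrating that by parts gives
\[
F_s(v,w)=\Big[\frac{\vartheta(t)-t}{t^s}\Big]_w^v+s\int_w^v\frac{\vartheta(t)-t}{t^{s+1}}\,\rd t,
\]
and now the whole problem is reduced to inserting the bound $|\vartheta(t)-t|\le \delta(t)$, where $\delta(t)$ is the relevant majorant, and estimating the resulting elementary integral.

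For part (i), where $w<v\le 10^{19}$, I would use $|\vartheta(t)-t|\le 1.95\sqrt t$ from Proposition \ref{prop:epnt} on the range $t\ge 1427$ — and observe that the range $t<1427$ contributes only boundedly and is absorbed by the stated constant $2$ rather than $1.95$ (this is the one place a little care with small primes is needed, so I would check the tiny initial segment by hand and confirm the slack between $1.95$ and $2$ covers it, together with the fact that $w>1$). Taking absolute values, the boundary term is at most $1.95(v^{1/2-\sigma_{\mathrm{Re}}}+w^{1/2-\sigma_{\mathrm{Re}}})$ where the real part of $s$ is $\alpha$ by our convention (here I would be slightly careful: the lemma is stated for general complex $s$, but in application $s=\alpha+it$, so $\mathfrak{Re}(s)=\alpha$ and $|t^{-s}|=t^{-\alpha}$; the cleanest is to bound everything by $t^{-\alpha}$). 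For the integral term, $|s|\int_w^v t^{-\alpha-1}\cdot 1.95\sqrt t\,\rd t = 1.95|s|\int_w^v t^{-1/2-\alpha}\,\rd t = 1.95|s|\,(w^{1/2-\alpha}-v^{1/2-\alpha})/(\alpha-1/2)$, which matches the claimed shape. Rounding $1.95$ up to $2$ throughout gives exactly the stated inequality.

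For part (ii), where $10^{19}\le w\le v$, the argument is identical but now $|\vartheta(t)-t|\le \varepsilon_w t$ with $\varepsilon_w$ the appropriate constant from Proposition \ref{prop:epnt} (using that $\varepsilon_w$ is nonincreasing in $w$, so the constant valid at the left endpoint $w$ dominates on all of $[w,v]$). The boundary term becomes $\le \varepsilon_w(v^{1-\alpha}+w^{1-\alpha})=\varepsilon_w(v^\beta+w^\beta)$, and the integral term becomes $|s|\int_w^v \varepsilon_w t^{-\alpha}\,\rd t=\varepsilon_w|s|\,(v^{1-\alpha}-w^{1-\alpha})/(1-\alpha)=\varepsilon_w|s|(v^\beta-w^\beta)/\beta$, so altogether $|F_s(v,w)|\le \varepsilon_w\big(v^\beta+w^\beta+|s|(v^\beta-w^\beta)/\beta\big)$, as claimed. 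The only genuinely delicate point in the whole proof is bookkeeping at the switch from part (i) to part (ii): one must make sure the $\varepsilon_w$-table covers every $w>10^{19}$ (note the typo in the displayed table, where $(10^{19},e^{45}]$ should presumably be followed by $(e^{45},e^{50}]$), and that the crude factor-of-$2$ Chebyshev bound in (i) really does dominate the contribution of primes below $1427$. Neither obstacle is serious; both are routine once the partial-summation identity is in place.
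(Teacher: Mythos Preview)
Your proposal is correct and follows essentially the same route as the paper: partial summation to express $F_s(v,w)$ as boundary terms plus an integral involving $\vartheta(t)-t$, then insertion of the bounds from Proposition~\ref{prop:epnt}. The paper is in fact terser than you are---it simply bounds $|E(t)|\le 2\sqrt t$ without commenting on the range $t<1427$ (in every application $w\ge1427$, so your worry about absorbing the small-$t$ contribution into the slack between $1.95$ and $2$ is unnecessary), and it does not pause over the monotonicity of $\varepsilon_w$ or the apparent gap in the table, both of which you rightly flag.
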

\begin{proof}
(i) By partial summation,
\begin{align*}
\sum_{w<p\le v} \frac{\log p}{p^s} & = \frac{\vartheta(v)}{v^s} - \frac{\vartheta(w)}{w^s} + \int_w^v s\frac{\vartheta(t)}{t^{s+1}}\;dt\\
& = \frac{v^{1-s}-w^{1-s}}{1-s} - \frac{E(v)}{v^s} + \frac{E(w)}{w^s} - \int_w^v s\frac{E(t)}{t^{s+1}}\;dt,
\end{align*}
so by the first part of Proposition \ref{prop:epnt},
\begin{align*}
    |F_s(v,w)| & \le \frac{|E(v)|}{v^\alpha} + \frac{|E(w)|}{w^\alpha} + |s|\int_w^v \frac{E(t)}{t^{1+\alpha}}\;dt\\
    & \le 2v^{1/2-\alpha} + 2w^{1/2-\alpha} + 2|s|\frac{v^{1/2-\alpha}-w^{1/2-\alpha}}{1/2-\alpha}.
\end{align*}

(ii) Similarly, by the second part of Proposition \ref{prop:epnt},
\begin{align*}
    |F_s(v,w)| & \le \frac{|E(v)|}{v^\alpha} + \frac{|E(w)|}{w^\alpha} + |s|\int_w^v \frac{E(t)}{t^{1+\alpha}}\;dt \le \varepsilon_w\Big(v^{1-\alpha} + w^{1-\alpha} + |s|\int_w^v \frac{dt}{t^{\alpha}}\Big)\\
    & = \varepsilon_w\Big(v^{1-\alpha} + w^{1-\alpha} + |s|\frac{v^{1-\alpha}-w^{1-\alpha}}{1-\alpha}\Big).
\end{align*}
\end{proof}

The following result plays the role of Corollary 6.1 in \cite{HT1}.
\begin{lemma}
\label{lm:6.1}
For $t\in\R$, $z>1$, and $\beta=1-\alpha$, let
\begin{align*}
    \delta_z := t\log z- \arctan(t/\beta).
\end{align*}
(i) For $1427\le w< v\le 10^{19}$ we have that $W(v,w,t)\ge W_0(v,w,t)$, where
\begin{align*}
W_0(v,w,t)\log v & =\frac{v^{\beta}-w^\beta}{\beta} - \frac{v^{\beta}\cos\delta_v-w^\beta\cos\delta_w}{\sqrt{\beta^2+t^2}}\\
& \qquad - 4(v^{1/2-\alpha}+w^{1/2-\alpha}) - 2(\alpha+|s|)\frac{w^{1/2-\alpha}-v^{1/2-\alpha}}{\alpha-1/2}.
\end{align*}
(ii) For $10^{19}\le w< v$ we have that $W(v,w,t)\ge W_0(v,w,t)$, where
\begin{align*}
W_0(v,w,t)\log v & = \frac{v^{\beta}-w^\beta}{\beta} - \frac{v^{\beta}\cos\delta_v-w^\beta\cos\delta_w}{\sqrt{\beta^2+t^2}}\\
& \qquad -2\varepsilon_w\big(v^\beta + w^\beta\big) -  \varepsilon_w(\alpha+|s|)\Big(\frac{v^\beta-w^\beta}{\beta}\Big).
\end{align*}
\end{lemma}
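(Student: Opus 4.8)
The plan is to pass from the unweighted sum $W(v,w,t)$ to a $\log p$-weighted version and then evaluate the latter directly with Lemma \ref{lm:6}. Since $1-\cos(t\log p)\ge 0$ and $\log p\le\log v$ for every prime $p\le v$, we have
\[
W(v,w,t)\,\log v\ \ge\ \sum_{w<p\le v}\frac{(1-\cos(t\log p))\log p}{p^\alpha},
\]
which is why it is $W_0(v,w,t)\log v$, rather than $W_0$ itself, that gets bounded in the statement. Writing $s=\alpha+it$ and using $\cos(t\log p)=\mathfrak{Re}(p^{-it})$, split the right-hand side as $\sum_{w<p\le v}\log p/p^\alpha-\mathfrak{Re}\sum_{w<p\le v}\log p/p^{s}$.

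Next I would apply Lemma \ref{lm:6} to each piece. Taking the parameter of that lemma to be the real number $\alpha$ gives $\sum_{w<p\le v}\log p/p^\alpha=(v^\beta-w^\beta)/\beta+F_\alpha(v,w)$, with $|F_\alpha(v,w)|$ bounded as in Lemma \ref{lm:6} but with $|s|$ replaced by $\alpha$; taking the parameter to be $s=\alpha+it$ gives $\sum_{w<p\le v}\log p/p^{s}=(v^{1-s}-w^{1-s})/(1-s)+F_s(v,w)$. To simplify the main term of the second sum, note that $1-s=\beta-it$ with $\beta=1-\alpha>0$, so $\beta-it$ lies in the open right half-plane and $\beta-it=\sqrt{\beta^2+t^2}\,e^{-i\arctan(t/\beta)}$. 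Hence
\[
\frac{v^{1-s}}{1-s}=\frac{v^{\beta}e^{-it\log v}}{\sqrt{\beta^2+t^2}\,e^{-i\arctan(t/\beta)}}=\frac{v^{\beta}}{\sqrt{\beta^2+t^2}}\,e^{-i\delta_v},
\]
and similarly with $w$ in place of $v$; taking real parts yields $\mathfrak{Re}\big((v^{1-s}-w^{1-s})/(1-s)\big)=(v^\beta\cos\delta_v-w^\beta\cos\delta_w)/\sqrt{\beta^2+t^2}$.

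Combining these, dividing through by $\log v$, and using $|\mathfrak{Re}F_s|\le|F_s|$, we obtain
\[
W(v,w,t)\log v\ \ge\ \frac{v^\beta-w^\beta}{\beta}-\frac{v^\beta\cos\delta_v-w^\beta\cos\delta_w}{\sqrt{\beta^2+t^2}}-|F_\alpha(v,w)|-|F_s(v,w)|.
\]
In the range $1427\le w<v\le 10^{19}$, Lemma \ref{lm:6}(i) bounds $|F_\alpha(v,w)|+|F_s(v,w)|$ by $4(v^{1/2-\alpha}+w^{1/2-\alpha})+2(\alpha+|s|)(w^{1/2-\alpha}-v^{1/2-\alpha})/(\alpha-1/2)$ — the two bounds adding cleanly precisely because the $F_\alpha$ bound carries $|s|$ replaced by $\alpha$ — which is exactly the error term in part (i); in the range $10^{19}\le w<v$, Lemma \ref{lm:6}(ii) gives $2\varepsilon_w(v^\beta+w^\beta)+\varepsilon_w(\alpha+|s|)(v^\beta-w^\beta)/\beta$, matching part (ii). The computation is short; the trickiest step is the polar-form evaluation above — keeping the sign of $\arg(\beta-it)$ correct (valid for all real $t$ since $\beta>0$) and remembering that one must add the $F_\alpha$-bound (with $|s|\mapsto\alpha$) to the $F_s$-bound to land on the stated error expressions. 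I do not anticipate any genuine obstacle beyond this bookkeeping.
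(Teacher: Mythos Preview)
Your proposal is correct and follows essentially the same approach as the paper: pass to the $\log p$-weighted sum via $\log p\le\log v$, apply Lemma~\ref{lm:6} at $\alpha$ and at $s=\alpha+it$, take real parts, and add the two error bounds. Your polar-form evaluation of $\mathfrak{Re}\big((v^{1-s}-w^{1-s})/(1-s)\big)$ is a slightly cleaner route to \eqref{eq:6.1.} than the paper's direct expansion, and you make explicit the $\log p\le\log v$ step that the paper leaves implicit.
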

\begin{proof}
We apply Lemma \ref{lm:6} with $s=1-\beta$ and $s=1-\beta+it$, and take the real part of the difference. Letting the difference of the sums be $S$, we have that
\begin{align*}
S : & = \sum_{w<p\le v}\Big( \frac{\log p}{p^{1-\beta}} - \frac{\log p}{p^{1-\beta+it}} \Big)= \sum_{w<p\le v} \frac{\log p}{p^{1-\beta}}(1-p^{-it}), \text{ so}\\
\mathfrak{Re}(S) & = \sum_{w<p\le v} \frac{\log p}{p^{1-\beta}}(1-\cos(t\log p)),
\end{align*}
which is the sum we wish to bound.

For a positive real number $z$, let $S_z :  = \frac{z^{\beta}}{\beta} - \frac{z^{\beta-it}}{\beta-it} $.
We have that
\begin{align*}
S_z& =  \frac{z^{\beta}}{\beta}\Big(1-\frac{\beta}{\beta-it}z^{-it}\Big) = \frac{z^{\beta}}{\beta}\Big(1-\beta\frac{\beta+it}{\beta^2+t^2}e^{-it\log z}\Big)\\
& = \frac{z^{\beta}}{\beta}\Big(1-\beta\frac{\beta+it}{\beta^2+t^2}[\cos(t\log z)-i\sin(t\log z)]\Big),
\end{align*}
so by Lemma \ref{lm:6.1},
\begin{align*}
\mathfrak{Re}(S_z) & = \frac{z^{\beta}}{\beta}\Big(1-\frac{\beta}{\beta^2+t^2}[\beta\cos(t\log z) + t\sin(t\log z)]\Big)\\
& = \frac{z^{\beta}}{\beta}\Big(1-\frac{\beta}{\sqrt{\beta^2+t^2}}\Big[\frac{\beta\cos(t\log z)}{\sqrt{\beta^2+t^2}} + \frac{t\sin(t\log z)}{\sqrt{\beta^2+t^2}}\Big]\Big)\\
& = \frac{z^{\beta}}{\beta}\Big(1-\frac{\beta}{\sqrt{\beta^2+t^2}} \cos(t\log z+\arctan(\beta/t))\Big)
 = \frac{z^{\beta}}{\beta}\Big(1-\frac{\beta\cos\delta_z}{\sqrt{\beta^2+t^2}}\Big).
\end{align*}
Thus,
\begin{align} \label{eq:6.1.}
\mathfrak{Re}(S_v-S_w) = \frac{v^{\beta}-w^\beta}{\beta}-\frac{v^{\beta}\cos\delta_v-w^\beta\cos\delta_w}{\sqrt{\beta^2+t^2}}.
\end{align}

Recalling the definition of $F_s(v,w)$, we have
\begin{align*}
\mathfrak{Re}(S)  & = \mathfrak{Re}(S_v-S_w + F_\alpha(v,w) -F_{s}(v,w))\\
& \ge \mathfrak{Re}(S_v-S_w) - |F_{\alpha}(v,w)| - |F_{s}(v,w)|
\end{align*}
which gives the desired result by \eqref{eq:6.1.} and Lemma \ref{lm:6}.
\end{proof}

 From Lemma \ref{lm:8prelim}, we see that a goal is 
to bound $W(y,1,t)$ from below, and pieces of this sum are bounded by Lemma \ref{lm:6.1}.
 Ideally, if $y$ were sufficiently small $W$ could be 
computed directly and the problem settled. In practice $W$ might only be computed up to some
convenient number $L$, suitable for numerical integration, after which the analytic bound $W_0(y,w,t)$ may be used. Still, there are further refinements to be made. Just as $x/\log x$ loses out to $\li(x)$, $W_0$ on a long interval is smaller than $W_0$ summed on a partition of the interval into
shorter parts.  This plan is reflected in the following lemma.

\begin{lemma}
\label{lm:8}
If $v,w$ satisfy the hypotheses of Lemma \ref{lm:6}, let
\[
W_*(v,w,t):=W_0(v/e^{\lfloor\log(y/w)\rfloor},w,t)+\sum_{j=0}^{\lfloor\log(v/w)\rfloor-1}W_0(v/e^j,v/e^{j+1},t).
\]
Suppose that $w,L$ satisfy $1427,L\le w$.
If $y\le 10^{19}$, then
\[
J_1\le\int_{T_0}^T\exp\big(-W_*(y,w,t)-W(L,1,t)\big)\;\frac{dt}{\sqrt{\alpha^2+t^2}}.
\]
If $y>e^{55}$ and $1427,L\le w\le 10^{19}$,  let
\begin{align*}
W_1&=W_*(10^{19},w,t),~~W_2=W_*(e^{45},10^{19},t),~~W_3=W_*(e^{50},e^{45},t),\\
W_4&=W_*(e^{55},e^{50},t),~~W_5=W_*(y,e^{55},t).
\end{align*}
Then
\[
J_1\le\int_{T_0}^T\exp\big(-W_1-W_2-W_3-W_4-W_5-W(L,1,t)\big)\;\frac{dt}{\sqrt{\alpha^2+t^2}}.
\]
\end{lemma}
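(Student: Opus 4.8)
The plan is to combine Lemma~\ref{lm:8prelim} with Lemma~\ref{lm:6.1} after splitting the sum $W(y,1,t)$ into pieces on which the appropriate case of Lemma~\ref{lm:6.1} applies, and then to refine each piece by the telescoping idea encoded in $W_*$. First I would start from
\[
J_1=\int_{T_0}^T\exp\big(-W(y,1,t)\big)\,\frac{dt}{\sqrt{\alpha^2+t^2}}
\]
and observe that since $1-\cos(t\log p)\ge0$ for every $p$, the sum $W(v,w,t)$ is nonnegative and additive over disjoint ranges of primes: if $1\le w_0<w_1<\dots<w_k$ then $W(w_k,w_0,t)=\sum_{i=1}^k W(w_i,w_{i-1},t)$. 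In particular $W(y,1,t)=W(L,1,t)+W(y,L,t)\ge W(L,1,t)+W(y,w,t)$ whenever $L\le w$ (discarding the nonnegative contribution of primes in $(L,w]$), and more generally $W(y,w,t)$ splits at the breakpoints $10^{19},e^{45},e^{50},e^{55}$.

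Next I would lower-bound each piece $W(v',w',t)$ by $W_*(v',w',t)$. Writing $m=\lfloor\log(v'/w')\rfloor$, partition $(w',v']$ into the blocks $(w',v'/e^m],(v'/e^m,v'/e^{m-1}],\dots,(v'/e,v']$; additivity of $W$ gives
\[
W(v',w',t)=W(v'/e^m,w',t)+\sum_{j=0}^{m-1}W(v'/e^{j+1}\!\cdot e^{j+1},v'/e^{j+1},t),
\]
and applying the relevant case of Lemma~\ref{lm:6.1} to each block (each block lies in a single range where one $\varepsilon_w$ or the $v\le10^{19}$ estimate is valid, since the breakpoints were chosen as block endpoints) yields $W(v',w',t)\ge W_*(v',w',t)$, matching the definition in Lemma~\ref{lm:8} — with the caveat that the lemma as stated writes $\lfloor\log(y/w)\rfloor$ for the number of whole blocks, which should be $\lfloor\log(v/w)\rfloor$; I would use $m=\lfloor\log(v'/w')\rfloor$ throughout so that the leftover block $(w',v'/e^m]$ has length $\le1$. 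One must check the hypotheses of Lemma~\ref{lm:6} hold for every block, which is where the assumptions $1427,L\le w$ and the case split on whether $v'\le10^{19}$ enter.

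Then, for $y\le10^{19}$, combining $W(y,1,t)\ge W(L,1,t)+W_*(y,w,t)$ with the fact that $u\mapsto e^{-u}$ is decreasing gives
\[
\exp\big(-W(y,1,t)\big)\le\exp\big(-W_*(y,w,t)-W(L,1,t)\big),
\]
and integrating against $dt/\sqrt{\alpha^2+t^2}$ over $[T_0,T]$ gives the first displayed bound. For $y>e^{55}$ with $1427,L\le w\le10^{19}$, I would instead write
\[
W(y,1,t)\ge W(L,1,t)+W(10^{19},w,t)+W(e^{45},10^{19},t)+W(e^{50},e^{45},t)+W(e^{55},e^{50},t)+W(y,e^{55},t),
\]
bound each of the five $W(\cdot,\cdot,t)$ terms below by the corresponding $W_i=W_*(\cdot,\cdot,t)$ as above, and again use monotonicity of $e^{-u}$ and integrate to get the second bound. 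The main obstacle is bookkeeping: verifying that in the telescoped partition every block falls entirely within a single validity range of Lemma~\ref{lm:6} (so that the right constant $\varepsilon_w$, or the sub-$10^{19}$ estimate, may be applied), handling the fractional leftover block correctly, and making sure the various lower-order error terms in $W_0$ were already absorbed when $W_0$ was defined so that no sign is lost in the chain of inequalities. There is no analytic difficulty beyond this; everything follows from nonnegativity and additivity of $W$ together with the already-established Lemmas~\ref{lm:8prelim} and~\ref{lm:6.1}.
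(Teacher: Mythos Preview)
Your proposal is correct and follows essentially the same route as the paper: telescope $W(v,w,t)$ into blocks of ratio $e$, apply Lemma~\ref{lm:6.1} to each block to get $W\ge W_*$, drop the nonnegative contribution of primes in $(L,w]$, and feed the resulting lower bound for $W(y,1,t)$ into Lemma~\ref{lm:8prelim}. You are also right that the first floor in the definition of $W_*$ should read $\lfloor\log(v/w)\rfloor$ rather than $\lfloor\log(y/w)\rfloor$; the paper's own proof uses $\lfloor\log(v/w)\rfloor$ as you do.
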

We remark that if $10^{19}<y\le e^{55}$, then there is an appropriate inequality for $J_1$
involving fewer $W_j$'s.  If $y$ is much larger than our largest example of $y=10^{35}$,
one might wish to use better approximations to $\vartheta(y)$ than
were used in Proposition \ref{prop:epnt}.
\begin{proof}
  If $1427\le w< v$ and $[w,v]$ satisfy the hypotheses of Lemma \ref{lm:6}, we have
\begin{align*}
W(v,w,t)&=W(v/e^{\lfloor\log(v/w)\rfloor},w,t)+\sum_{j=0}^{\lfloor\log(v/w)\rfloor-1}W(v/e^j,v/e^{j+1},t)\\
&\ge W_0(v/e^{\lfloor\log(v/w)\rfloor},w,t)+\sum_{j=0}^{\lfloor\log(v/w)\rfloor-1}W_0(v/e^j,v/e^{j+1},t).
\end{align*}
The result then follows from Lemma \ref{lm:8prelim}.
\end{proof}
\begin{remark}
\label{rmk:lm8}
We implement Lemma \ref{lm:8} by choosing $L$ as large as possible so as not to
interfere overly with numerical
integration.  We have found that $L=10^6$ works well. 
The ratio $e$ in the definition of $W_*$ is convenient, but might be tweaked for
 slightly better results.  The individual terms in the sum $W(L,1,t)$
are as in \eqref{def:W}, except for the first 30 primes, where instead
we forgo using the inequality in 
\eqref{eq:ineq317}, using instead the slightly larger expression
\[
\frac12\log\Big(1+\frac{2(1-\cos(t\log p))}{p^\alpha(1-p^{-\alpha})^2}\Big).
\]
 
 We choose $w$ as a function
$w(t)$ in such a way that the bound in Lemma \ref{lm:6.1} is minimized.
For simplicity, we ignore the oscillating terms, i.e., we set
\begin{align*}
  \frac{\partial}{\partial w}&\Big[ -w^\beta/\beta- 4w^{1/2-\alpha}+ 2(\alpha+|s|)w^{1/2-\alpha}/(1/2-\alpha)\Big]\\
 &~ =  -w^{\beta-1} - 4w^{-1/2-\alpha} /(1/2-\alpha)+ 2(\alpha + |s|)w^{-1/2-\alpha}
\end{align*}
equal to 0.
Multiplying by $w^{1/2+\alpha}$ and solving for $w$ gives
$$ 
w(\alpha,t): = \Big(\frac{4}{\alpha-1/2} + 2\alpha + 2\sqrt{\alpha^2+t^2} \Big)^2.
$$
We let
$$
w(t):=\max\{L,w(\alpha,t)\}.
$$
\end{remark}

Our next result, based on  \cite[Lemma 9]{HT1}, gives a bound on the
number of $y$-smooth integers in a short interval.
\begin{lemma} \label{lm:9}
Let $0<d<1$, $T>1$ be such that
$z := (e^{2T^{d-1}}-1)^{-1}>1$.
We have
$$
    \Psi(xe^{T^{1-d}},y)  - \Psi(xe^{-T^{1-d}},y) \le
     e^{\alpha^2/2z^2-\alpha T^{d-1}}x^\alpha\zeta(\alpha,y)\sqrt{\frac{2e}{\pi}}\frac{J_2}{z}.
$$
where, with $W(y,w,t)$ as in Lemma \ref{lm:6.1},
$$
J_2:=\int_0^{\infty}\exp\Big\{-\frac{t^2}{2z^2}-W(y,1,t)\Big\}\;dt.
$$
\end{lemma}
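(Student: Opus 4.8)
The plan is to bound the short-interval count $\Psi(xe^{T^{1-d}},y)-\Psi(xe^{-T^{1-d}},y)$ by a smoothed version of itself and then apply Perron/saddle-point estimates, mimicking Lemma 9 of \cite{HT1}. First I would pass to a smooth majorant: since $\log(x/n)$ ranges over an interval of length $2T^{1-d}$ as $n$ runs over the $y$-smooth integers in the short interval, and $z=(e^{2T^{d-1}}-1)^{-1}$ controls the relevant scale, I would use the Gaussian weight $e^{-t^2/(2z^2)}$ (integrated against $\zeta(\alpha+it,y)$) as a device that dominates the indicator of the short interval up to the constant $\sqrt{2e/\pi}$ and the factor $e^{\alpha^2/2z^2}$ appearing in the statement. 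Concretely, for each $n$ counted one has $|\log(xe^{\pm\text{(center)}}/n)|\le T^{1-d}$-type control, and the standard inequality comparing $\mathbf 1_{[-h,h]}$ with a Gaussian of width $\sim z$ yields a clean bound.

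The key steps, in order: (1) Write $\Psi(xe^{T^{1-d}},y)-\Psi(xe^{-T^{1-d}},y)=\sum_{P(n)\le y,\ xe^{-T^{1-d}}<n\le xe^{T^{1-d}}}1$ and bound the indicator of membership in the short interval by $C\,e^{\alpha^2/(2z^2)}\,e^{-\alpha T^{d-1}}\,(x/n)^\alpha\int_{-\infty}^\infty e^{-t^2/(2z^2)}(n/x)^{it}\,dt$-type expression, where the $(x/n)^\alpha$ comes from Rankin's trick at the shifted line and the Gaussian produces the localization; here one chooses the Gaussian width so that $z$ matches $(e^{2T^{d-1}}-1)^{-1}$ and the edge terms $e^{\pm\alpha T^{d-1}}$ are absorbed. (2) Interchange the (finite) sum over $y$-smooth $n$ with the $t$-integral, as justified in the proof of Lemma \ref{lm:10}, to get $x^\alpha\int_{-\infty}^\infty e^{-t^2/(2z^2)}\zeta(\alpha+it,y)x^{-it}\,dt$ up to the stated prefactors. (3) Take absolute values inside, apply the bound $|\zeta(\alpha+it,y)/\zeta(\alpha,y)|\le\exp\{-W(y,1,t)\}$ from \eqref{eq:ineq317} (equivalently Lemma \ref{lm:8prelim}), and use that the integrand is even in $t$ to fold $(-\infty,\infty)$ onto $[0,\infty)$ and produce exactly $2J_2$. (4) Collect the constants: the Gaussian normalization $\int e^{-t^2/2z^2}\,dt=z\sqrt{2\pi}$ together with the comparison constant gives the overall $\sqrt{2e/\pi}\cdot J_2/z$ shape, and the exponential prefactor $e^{\alpha^2/2z^2-\alpha T^{d-1}}$ records the cost of shifting the Rankin line and completing the square.

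I expect the main obstacle to be step (1): getting the comparison between the sharp short-interval indicator and the Gaussian smoothing with the \emph{precise} constant $\sqrt{2e/\pi}$ and the exact exponential factor $e^{\alpha^2/(2z^2)-\alpha T^{d-1}}$, rather than some unspecified $O(1)$ constant. This is the place where the choice $z=(e^{2T^{d-1}}-1)^{-1}$ is forced, and where one must be careful that the Gaussian centered appropriately (and evaluated via $\int_0^\infty e^{-t^2/2z^2-ct}\,dt$-type completions of the square) genuinely dominates the count uniformly over all $y$-smooth $n$ in the window. Once that elementary but delicate inequality is in hand, the rest is the now-familiar routine: termwise Rankin bound, Fubini on a finite sum, the prime-sum lower bound $W(y,1,t)$, and evenness to halve the integral. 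The hypothesis $z>1$ is exactly what is needed for the Gaussian width to exceed the window half-length so that the domination holds.
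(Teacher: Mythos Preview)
Your plan is correct and follows the paper's proof essentially step for step. For the obstacle you flag in step~(1), the paper recenters at $\xi:=xe^{-T^{d-1}}$ so that the interval becomes $(\xi,\xi+\xi/z]$ and hence $|z\log(\xi/n)|\le z\log(1+1/z)\le1$ there, giving the clean pointwise domination $1\le\sqrt{e}\,\exp\{-\tfrac12[z\log(\xi/n)]^2\}$; it then applies the shifted Gaussian identity $e^{-v^2/2}=(2\pi)^{-1/2}e^{\sigma^2/2-\sigma v}\int_{\R}e^{-t^2/2+it(\sigma-v)}\,dt$ with $\sigma=\alpha/z$ and $v=-z\log(\xi/n)$, which manufactures exactly the factors $(\xi/n)^\alpha$, $e^{\alpha^2/2z^2}$, and (via $\xi^\alpha=x^\alpha e^{-\alpha T^{d-1}}$) the $e^{-\alpha T^{d-1}}$, with the constant $\sqrt{2e/\pi}$ falling out after the substitution $t\mapsto t/z$ --- after which your steps (2)--(4) go through verbatim.
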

\begin{proof}
Let $\xi = xe^{-T^{d-1}}$,  so that
\begin{equation}
\label{eq:relabel}
\Psi(xe^{T^{d-1}},y) -\Psi(xe^{-T^{d-1}},y) 
    = \Psi(\xi+\xi/z,y)-\Psi(\xi,y).
\end{equation}
For $\xi< n\le \xi+\frac{\xi}{z}$, we have that
$$1>\frac{\xi}{n}\ge \Big(1+\frac{1}{z}\Big)^{-1},$$
so
$0>\log(\xi/n) \ge -\log(1+1/z) \ge -\frac{1}{z}$, which implies that
$0<[z\log(\xi/n)]^2 \le 1.$
Thus,
$$
\Psi(\xi+\xi/z,y)-\Psi(\xi,y) = \sum_{\substack{P(n)\le y\\\xi< n\le \xi+\xi/z}} 1 \le \sqrt{e}\sum_{\substack{P(n)\le y\\\xi< n\le \xi+\xi/z}}\exp\Big\{-\frac{1}{2}[z\log(\xi/n)]^2\Big\}.
$$
For $\sigma,v\in\R$, we have the formula
$$
e^{-v^2/2} = \frac{1}{\sqrt{2\pi}}e^{\sigma^2/2-\sigma v}\int_{-\infty}^{+\infty}\exp\Big\{-\frac{1}{2}t^2 +it(\sigma-v)\Big\}\;dt
$$
Letting $\sigma=\alpha/z$, $v=-z\log(\xi/n)$, we obtain
\begin{align*}
\Psi(\xi+\xi/z,y)-&\Psi(\xi,y) \\
 \le &\sqrt{\frac{e}{2\pi}}\sum_{\substack{P(n)\le y\\\xi< n\le \xi+\xi/z}}e^{\sigma^2/2-\sigma v}\int_{-\infty}^{+\infty}\exp\Big\{-\frac{1}{2}t^2 +it(\sigma-v)\Big\}\;dt\\
=&  e^{\alpha^2/2z^2}\sqrt{\frac{e}{2\pi}}\int_{-\infty}^{+\infty}\exp\Big\{-\frac{1}{2}t^2 +it\alpha/z\Big\}\sum_{\substack{P(n)\le y\\\xi< n\le \xi+\xi/z}}\Big(\frac{\xi}{n}\Big)^{\alpha+it z}\;dt.
\end{align*}
Since $\alpha \le 1 \le z$, changing variables $t\mapsto t/z$ and taking the modulus gives
\begin{align*}
\Psi(\xi+\xi/z,y)&-\Psi(\xi,y) \\
 \le &z^{-1}e^{\alpha^2/2z^2}\sqrt{\frac{e}{2\pi}}\int_{-\infty}^{+\infty}\exp\Big\{-\frac{t^2}{2z^2} +it\alpha/z^2\Big\}\sum_{\substack{P(n)\le y\\\xi< n\le \xi+\xi/z}}\Big(\frac{\xi}{n}\Big)^{\alpha+it}\;dt\\
 \le& \frac{\xi^\alpha}{z}e^{\alpha^2/2z^2}\sqrt{\frac{e}{2\pi}}\int_{-\infty}^{+\infty}e^{-t^2/2z^2}|\zeta(\alpha+it,y)|\;dt\\
=&\frac{\xi^\alpha}{z}e^{\alpha^2/2z^2}\sqrt{\frac{2e}{\pi}}\int_0^\infty
e^{-t^2/2z^2}|\zeta(\alpha+it,y)|\;dt.
\end{align*}
This last integral may be estimated by the method of Lemma \ref{lm:8prelim},
giving
\[
\int_0^\infty e^{-t^2/2z^2}|\zeta(\alpha+it,y)|\;dt
\le \zeta(\alpha,y)\int_0^\infty
\exp\Big(-\frac{t^2}{2z^2}-W(y,1,t)\Big)\;dt
= \zeta(\alpha,y)J_2.
\]
We have
 \[
    \Psi(\xi+\xi/z,y)-\Psi(\xi,y)  \le \xi^\alpha\zeta(\alpha,y)e^{\alpha^2/2z^2}\sqrt{\frac{2e}{\pi}}\frac{J_2}{z},
 \]
 and the lemma now follows from \eqref{eq:relabel} and the definition of $\xi$.
\end{proof}
\begin{remark}
For $t$ large, say $t>2z\log z$, we can ignore the term $W(y,1,t)$ in $J_2$,
getting a suitably tiny numerical estimate for the tail of this 
rapidly converging integral.  The part for $t$ small
may be integrated numerically with $w(t), L$ as in Remark \ref{rmk:lm8}. 
\end{remark}

With these lemmas, we now have our principal result.
\begin{theorem}\label{thm:main}
Let  $d,T,z$ be as in Lemma \ref{lm:9}, let $J_0^\pm$ be as in
\eqref{eq:J0}, $J_1$ as in Lemma \ref{lm:8prelim}, and
$J_2$ as in Lemma \ref{lm:9}.
We have
$$
\Psi(x,y) \ge \frac{x^\alpha\zeta(\alpha,y)}{\pi}\Big( J_{0}^- - J_1 - T^{-d} -e^{\alpha^2/2z^2} \sqrt{2\pi e}\frac{J_2}{z}\Big)
$$
and
$$
\Psi(x,y) \le \frac{x^\alpha\zeta(\alpha,y)}{\pi}\Big(J_{0}^+ + J_1 + T^{-d} + e^{\alpha^2/2z^2}\sqrt{2\pi e}\frac{J_2}{z}\Big).
$$
\end{theorem}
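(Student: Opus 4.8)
The plan is to assemble Lemmas \ref{lm:10}, \ref{lm:11}, \ref{lm:8prelim} and \ref{lm:9} around the cutoff $T_0$ introduced before Lemma \ref{lm:11}, under the standing assumption $T_0\le T$ (needed both for the decomposition below and for the integral defining $J_1$).

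First I would split the truncated Perron integral of Lemma \ref{lm:10}. Writing $s=\alpha+it$ and $g(t)=\zeta(\alpha+it,y)x^{\alpha+it}/(\alpha+it)$, the Taylor coefficients of $\log\zeta(\alpha+it,y)$ about $t=0$ are real, so $g(-t)=\overline{g(t)}$ and
\[
\frac{1}{2\pi i}\int_{\alpha-iT}^{\alpha+iT}\zeta(s,y)\frac{x^s}{s}\,ds=\frac{1}{\pi}\int_0^{T}\mathfrak{Re}\,g(t)\,dt=\frac{1}{\pi}\int_0^{T_0}\mathfrak{Re}\,g(t)\,dt+\frac{1}{\pi}\int_{T_0}^{T}\mathfrak{Re}\,g(t)\,dt.
\]
The first summand equals $\frac{1}{2\pi i}\int_{\alpha-iT_0}^{\alpha+iT_0}\zeta(s,y)\frac{x^s}{s}\,ds$, so by Lemma \ref{lm:11} it lies in $[x^\alpha\zeta(\alpha,y)J_0^-/\pi,\,x^\alpha\zeta(\alpha,y)J_0^+/\pi]$. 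For the second summand I would use the pointwise bound from the proof of Lemma \ref{lm:8prelim}: by \eqref{eq:ineq317}, $|\mathfrak{Re}\,g(t)|\le|g(t)|\le x^\alpha\zeta(\alpha,y)\exp(-W(y,1,t))/\sqrt{\alpha^2+t^2}$, so $|\frac{1}{\pi}\int_{T_0}^{T}\mathfrak{Re}\,g(t)\,dt|\le x^\alpha\zeta(\alpha,y)J_1/\pi$. Together these give
\[
\frac{x^\alpha\zeta(\alpha,y)}{\pi}\big(J_0^--J_1\big)\le\frac{1}{2\pi i}\int_{\alpha-iT}^{\alpha+iT}\zeta(s,y)\frac{x^s}{s}\,ds\le\frac{x^\alpha\zeta(\alpha,y)}{\pi}\big(J_0^++J_1\big).
\]

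Next I would apply Lemma \ref{lm:10} to compare this truncated integral to $\Psi(x,y)$, and bound the short-interval difference $\Psi(xe^{T^{d-1}},y)-\Psi(xe^{-T^{d-1}},y)$ using Lemma \ref{lm:9}. Multiplying by the $e^{\alpha T^{d-1}}$ from Lemma \ref{lm:10} cancels the $e^{-\alpha T^{d-1}}$ supplied by Lemma \ref{lm:9}, leaving $e^{\alpha^2/2z^2}x^\alpha\zeta(\alpha,y)\sqrt{2e/\pi}\,J_2/z=\frac{x^\alpha\zeta(\alpha,y)}{\pi}\,e^{\alpha^2/2z^2}\sqrt{2\pi e}\,J_2/z$, since $\pi\sqrt{2e/\pi}=\sqrt{2\pi e}$. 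Hence the error term in Lemma \ref{lm:10} is at most $\frac{x^\alpha\zeta(\alpha,y)}{\pi}\big(T^{-d}+e^{\alpha^2/2z^2}\sqrt{2\pi e}\,J_2/z\big)$, and adding or subtracting this to the two-sided estimate for the truncated integral yields exactly the two displayed bounds for $\Psi(x,y)$.

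I do not expect a real obstacle here; the argument is essentially bookkeeping. The two points that merit care are the symmetry step converting the $\frac{1}{2\pi i}$-normalized integral over $[\alpha-iT,\alpha+iT]$ into $\frac{1}{\pi}\int_0^{T}\mathfrak{Re}\,g$, so that $J_0^\pm$ and $J_1$ (which are defined as integrals over $[0,T_0]$ and $[T_0,T]$) enter with the correct power of $\pi$, and the small arithmetic reconciling the constant $\sqrt{2e/\pi}$ of Lemma \ref{lm:9} with the $\frac{1}{\pi}\sqrt{2\pi e}$ appearing in the statement.
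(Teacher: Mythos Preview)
Your proposal is correct and is precisely the assembly of Lemmas \ref{lm:10}, \ref{lm:11}, \ref{lm:8prelim}, and \ref{lm:9} that the paper intends; the paper itself gives no detailed proof of Theorem \ref{thm:main}, simply stating that ``with these lemmas, we now have our principal result.'' Your handling of the two delicate points---the symmetry reduction to $\frac{1}{\pi}\int_0^T\mathfrak{Re}\,g$ and the constant reconciliation $\pi\sqrt{2e/\pi}=\sqrt{2\pi e}$ together with the cancellation of $e^{\pm\alpha T^{d-1}}$---is exactly right.
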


\section{Computations}

In this section we give some guidance on how, for a given pair $x,y$, the numbers $\alpha$,
$\zeta(\alpha,y)$, and $\sigma_j$ for $j\le 5$ may be numerically approximated.
Further, we discuss how these data may be used to numerically approximate
$\Psi(x,y)$ via Theorem \ref{thm:main}.

\subsection{Computing $\alpha$}
Given a number $a\in(0,1)$ and a large number $y$ we may obtain upper and lower bounds
for the sum
$$
\sigma_1(a,y)=\sum_{p\le y}\frac{\log p}{p^a-1}.
$$
First, we choose a moderate bound $w_0\le y$ where we can compute the sum $\sigma_1(a,w_0)$
relatively easily, such as $w_0=179{,}424{,}673$, the ten-millionth prime.  The sum
\begin{equation}
\label{eq:basicsum}
\sum_{w_0<p\le y}\frac{\log p}{p^a}
\end{equation}
may be approximated easily with Proposition \ref{prop:epnt} and partial summation.
Let $l^-(a,w_0,y)$ be a lower bound for this sum and let $l^+(a,w_0,y)$ be an upper bound.
Then
$$
l^-(a,w_0,y)+\sigma_1(a,w_0) \le \sigma_1(a,y)\le \frac{w_0^a}{w_0^a-1}l^+(a,w_0,y)+\sigma_1(a,w_0).
$$
We choose $\alpha$ as a number $a$ where $\log x$ lies between these two bounds.
If a given trial for $a$ is too small, this is detected by our lower bound for $\sigma_1(a,y)$ lying
above $\log x$, and if $a$ is too large, we see this if our upper bound for $\sigma_1(a,y)$
lies below $\log x$.  It does not take long via linear interpolation to find a reasonable choice for $\alpha$.
While narrowing in, one might use a less ambitious choice for $w_0$.

The partial summation used to estimate \eqref{eq:basicsum} and similar sums
may be summarized in the following result.
\begin{lemma}
\label{lem:ps}
Suppose $f(t)$ is positive and $f'(t)$ is negative on $[w_0,w_1]$.  Suppose too
that $t-2\sqrt{t}<\vartheta(t)\le t$ on $[w_0,w_1]$.  Then
\begin{align*}
\int_{w_0}^{w_1}(1-1/\sqrt{t})f(t)\;dt&+(w_0-\vartheta(w_0)-2\sqrt{w_0})f(w_0)\\
&\le\sum_{w_0<p\le w_1}f(p)\log p
\le\int_{w_0}^{w_1}f(t)\;dt+(w_0-\vartheta(w_0))f(w_0).
\end{align*}
\end{lemma}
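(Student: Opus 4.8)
The plan is to prove Lemma \ref{lem:ps} by a straightforward Abel summation (partial summation) argument, converting the prime sum into a Riemann--Stieltjes integral against $\vartheta(t)$ and then bounding the error between $\vartheta(t)$ and $t$ using the stated hypothesis $t-2\sqrt t<\vartheta(t)\le t$. Concretely, writing $\sum_{w_0<p\le w_1}f(p)\log p=\int_{w_0^+}^{w_1}f(t)\,d\vartheta(t)$ and integrating by parts, I would get
\[
\sum_{w_0<p\le w_1}f(p)\log p=f(w_1)\vartheta(w_1)-\lim_{t\to w_0^+}f(t)\vartheta(t^-)-\int_{w_0}^{w_1}\vartheta(t)f'(t)\,dt,
\]
and then substitute $\vartheta(t)=t-(t-\vartheta(t))$, using that $f'$ is negative so that replacing $\vartheta(t)$ by its upper or lower bound flips or preserves inequalities in the predictable way.

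The cleanest route, which I would actually write out, is to compare directly with the ``main term'' $\int_{w_0}^{w_1}f(t)\,dt=\int_{w_0}^{w_1}f(t)\,dt$. Set $E(t)=t-\vartheta(t)\in[0,2\sqrt t)$. Abel summation gives
\[
\sum_{w_0<p\le w_1}f(p)\log p
=\int_{w_0}^{w_1}f(t)\,dt-\Bigl[f(w_1)E(w_1)-f(w_0)E(w_0)\Bigr]+\int_{w_0}^{w_1}E(t)f'(t)\,dt,
\]
obtained by writing $\vartheta(t)=t-E(t)$ in the integration-by-parts identity and recombining the boundary term at $w_1$ with $\int f'(t)\,t\,dt=[t f(t)]-\int f(t)\,dt$. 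Since $f'\le0$ and $0\le E(t)<2\sqrt t$, the last integral lies in $(-2\int_{w_0}^{w_1}\sqrt t\,(-f'(t))\,dt,\,0]$; I would then undo that bound by the same partial-summation identity applied to the function $\sqrt t\,f(t)$ — noting $\frac{d}{dt}(\sqrt t f(t))=\frac{1}{2\sqrt t}f(t)+\sqrt t f'(t)$ — to fold the correction cleanly back into an integral of $(1-1/\sqrt t)f(t)$ against $dt$ plus boundary terms at $w_0$. For the upper bound I would simply use $E(t)\ge 0$ together with $f'\le 0$, keeping only the $f(w_0)E(w_0)=f(w_0)(w_0-\vartheta(w_0))$ boundary term and discarding the nonpositive pieces; for the lower bound I use $E(t)<2\sqrt t$ and $E(w_1)\ge 0$, $f(w_1)\ge 0$ to drop the $-f(w_1)E(w_1)$ term, producing the $(w_0-\vartheta(w_0)-2\sqrt{w_0})f(w_0)$ correction and the $\int(1-1/\sqrt t)f(t)\,dt$ main term.

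The one technical point to be careful about — and the step I expect to require the most attention — is the bookkeeping of the boundary contributions at $w_0$ (and checking that the $w_1$ boundary terms genuinely have the right sign to be dropped), since $\vartheta$ jumps at primes and one must be consistent about one-sided limits; but this is routine once the Stieltjes integration by parts is set up carefully, and the monotonicity hypotheses on $f,f'$ do all the real work. I would close by remarking that in the application one takes $f(t)=t^{-a}$ (or similar) on an interval $[w_0,w_1]\subseteq[w_0,10^{19}]$ where Proposition \ref{prop:epnt} supplies $t-2\sqrt t\le t-1.95\sqrt t\le\vartheta(t)\le t$, so the hypotheses are met.
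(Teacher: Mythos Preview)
Your approach by Abel summation is exactly right and is presumably what the authors had in mind; in fact the paper does not include a proof of this lemma at all, treating it as a routine partial-summation exercise and moving on.

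One small slip in your sketch of the lower bound: you say you drop $-f(w_1)E(w_1)$ using $E(w_1)\ge0$ and $f(w_1)\ge0$, but those inequalities make the term nonpositive, so it cannot simply be discarded from below. What actually happens is that after bounding $\int_{w_0}^{w_1} E(t)f'(t)\,dt\ge\int_{w_0}^{w_1} 2\sqrt t\,f'(t)\,dt$ (valid since $f'<0$) and integrating the right side by parts, a boundary term $+2\sqrt{w_1}\,f(w_1)$ appears; combined with $-E(w_1)f(w_1)$ this gives $(2\sqrt{w_1}-E(w_1))f(w_1)>0$ by the hypothesis $E(w_1)<2\sqrt{w_1}$, and \emph{that} positive quantity may be dropped. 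With this correction your argument goes through verbatim and yields precisely the stated inequalities.
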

Because of Proposition \ref{prop:epnt}, the condition on $\vartheta$ holds
if $[w_0,w_1]\subset[1427,10^{19}]$.  For intervals beyond $10^{19}$, it
is easy to fashion an analogue of Lemma \ref{lem:ps} using the other
estimates of Proposition \ref{prop:epnt}.

\subsection{Computing $\sigma_0=\log\zeta(\alpha,y)$ and the other $\sigma_j$'s}

Once a choice for $\alpha$ is computed it is straightforward to compute $\sigma_0$ and
the other $\sigma_j$'s.

We have
$$
\sigma_0(\alpha,y)=\sum_{p\le y}-\log(1-p^{-\alpha}).
$$
We may compute this sum up to some moderate $w_0$ as with the $\alpha$ computation.
For the range $w_0<p\le y$ we may approximate the summand by $p^{-\alpha}$ and
sum this over $(w_0,y]$ using partial summation (Lemma \ref{lem:ps})
and Proposition \ref{prop:epnt}, say 
a lower bound is $l_0^-$ and an upper bound is $l_0^+$.  Then
$$
l_0^-+\sigma_0(\alpha,w_0)\le \sigma_0(\alpha,y)\le\frac{-\log(1-w_0^{-\alpha})}{w_0^{-\alpha}}l_0^++\sigma_0(\alpha,w_0).
$$
The other $\sigma_j$'s are computed in a similar manner.  

\subsection{Data}

We record our calculations of $\alpha$ and the numbers $\sigma_j$ for two examples.  Note that
we obtain bounds for $\zeta$ via $\sigma_0=\log\zeta$.
\begin{figure}[H]
  \caption{Data.} \label{fig:data}
  \[\begin{array}{c|cc}
 x & 10^{100} & 10^{500} \\
 y & 10^{15} & 10^{35} \\
\hline
\alpha & .9111581 & .94932677 \\
\hline
\zeta & 352{,}189\pm16 & 2.09222\cdot 10^{10}\pm5\cdot 10^5 \\
\hline
\sigma_1^{*} & 4.3\cdot 10^{-4} & 5.6\cdot 10^{-4}\\
\hline
\sigma_2 & 5{,}763.47\pm0.03 &  71{,}689.2 \pm 0.02\\
\hline
\sigma_3 & 159{,}066.8\pm0.5& 4{,}779{,}948.5\pm0.5\\
\hline
\sigma_4 & 4{,}604{,}079\pm8& 330{,}260{,}722\pm21\\
\hline
\sigma_5^+ & 1.3725\cdot 10^8 &  2.3353\cdot 10^{10}
\end{array}\]
\end{figure}

Note that $\sigma_1^{*}$ is an upper bound for $|\sigma_1-\log x|$,
and $\sigma_5^+$ is an upper bound for $\sigma_5$.

The functions $\alpha(x,y)$ and $\sigma_j(x,y)$ are of interest in their own 
right. A simple observation from their definitions allows for more general bounds on $\alpha$ and $\sigma_j$ using the data in Figure \ref{fig:data}, as described in the following remark.

\begin{remark}
For pairs $x,y$ and $x',y'$, if $x\ge x'$ and $y \le y'$ then
$\alpha(x,y)\le \alpha(x',y')$. Similarly, if $\alpha(x,y)\ge \alpha(x',y')$ and $y \le y'$ then $\sigma_j(x,y)\le \sigma_j(x',y')$.
\end{remark}

\subsection{A word on numerical integration}

The numerical integration needed to estimate $J_1,J_2$ is difficult, especially when
we choose a large value of $L$, like $L=10^6$.  We performed these integrals independently
on both Mathematica and Sage platforms.  It helps to segment the range of integration,
but even so, the software can report an error bound in addition to the main estimate.
In such cases we have always added on this error bound and then rounded up, since we
seek upper bounds for these integrals.   In a case where one wants to be assured
of a rigorous estimate, there are several options, each carrying some costs.  One can
use a Simpson or midpoint quadrature with a mesh say of $0.1$ together with a careful
estimation of the higher derivatives needed to estimate the error.  An alternative is
to do a Riemann sum with mesh $0.1$, where on each interval and for each separate
cosine term appearing, the maximum contribution is calculated.  If this is done with
$T=4\cdot10^5$ and $L=10^6$, there would be magnitude $10^{11}$ of these
calculations.  The extreme value of the cosine contribution would either be at an
endpoint of an interval or $-1$ if the argument straddles a number that is $\pi\bmod 2\pi$.
We have done a mild form of this method in our estimation of the integrals $J_0^\pm$.

\subsection{Example estimates}

We list some example values of $x,y$ and the corresponding estimates in the figure below.
\begin{figure}[H]
  \caption{Results.} \label{fig:res}
  \[\begin{array}{c|cc}
x &  10^{100} &  10^{500} \\
y &  10^{15} &  10^{35} \\
\hline
T_3 & .00642708 & .00114940 \\
T_2 & .00644109 & .00115038 \\
Z^- & .0385260 & .0124202 \\
Z^+ & .0403125 & .0127461 \\
T_1 & .0478624 & .0155272 \\
T_0 & .0514483 & .0161799 \\
T &  4\cdot 10^5 & 10^9 \\
d & 0.57 & 0.58 \\
\hline
J_{0}^- &  1.78554\cdot10^{-2} &  4.90043\cdot10^{-3} \\
J_{0}^+ &  1.80312\cdot10^{-2} &  4.92738\cdot10^{-3} \\
J_1 &  7.236\cdot10^{-4} &  1.717\cdot10^{-6} \\
J_2 &  1.758\cdot10^{-2} &  4.745\cdot10^{-3} \\
\hline\\
\Psi^- &  2.3302\cdot10^{94}& 1.4989\cdot10^{482} \\
\Psi^+ &  2.9227\cdot10^{94} &  1.5118\cdot10^{482} \\
\end{array}\]
\end{figure}

\section{Appendix}

We prove the following theorem.
\begin{theorem}[Granville and Soundararajan]
\label{thm:GS}
If $3\le y\le x$ and $1/\log y\le\sigma\le1$, then
$$
\Psi(x,y)\le1.39\frac{y^{1-\sigma}}{\log x}x^\sigma\zeta(\sigma,y).
$$
\end{theorem}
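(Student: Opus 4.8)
The plan is to start from Rankin's inequality $\Psi(x,y)\le x^\sigma\zeta(\sigma,y)$ but extract a saving of order $y^{1-\sigma}/\log x$ by exploiting the primes $p\in(y,x]$ (or $p\le y$ close to $y$) that Rankin's crude bound wastes. Concretely, I would fix $\sigma\in[1/\log y,1]$ and consider, for each $y$-smooth $n\le x$, the associated integers $pn$ where $p$ is a prime with $y<p\le x/n$. These $pn$ are distinct as the pair $(p,n)$ varies (since $p=P(pn)>y\ge P(n)$), and each is $\le x$, so counting them gives a relation of the form $\sum_{P(n)\le y}\pi(x/n)\le x$ — more precisely $\sum_{n\le x,\,P(n)\le y}\#\{p:\,y<p\le x/n\}\le x$, since distinct such products are all at most $x$. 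The key is that for the bulk of the relevant $n$, the number of primes in $(y,x/n]$ is comfortably bounded below by something like $c\,(x/n)/\log x$ by Chebyshev-type estimates, which converts the trivial count into an upper bound for a weighted sum of $\Psi$.

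The main steps, in order: first, establish the combinatorial injection $(p,n)\mapsto pn$ and the resulting inequality $\sum_{P(n)\le y}\bigl(\pi(x/n)-\pi(y)\bigr)^+\le x$. Second, use an explicit lower bound for $\pi(t)$ for $t$ in the relevant range to replace $\pi(x/n)-\pi(y)$ by a clean expression; here one wants a bound valid uniformly, and the constant $1.39$ will emerge from the quality of this Chebyshev bound (some version of $\pi(t)\ge t/\log t$ with an explicit correction, or $\vartheta(t)\ge ct$). Third, relate $\sum_{P(n)\le y}(x/n)/\log(x/n)$ back to $\zeta(\sigma,y)$: one writes $1/n \ge n^{-\sigma}\cdot n^{\sigma-1}$ and, since $n\le x$ forces $n^{\sigma-1}\ge x^{\sigma-1}$, one gets $\sum 1/n \ge x^{\sigma-1}\sum n^{-\sigma}$ restricted to $n\le x$; handling the truncation $n\le x$ versus the full sum $\zeta(\sigma,y)$, and the logarithmic weight $1/\log(x/n)$, is where care is needed. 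Fourth, assemble: the chain of inequalities yields $x^{\sigma}\zeta(\sigma,y)/\log x \lesssim$ (constant)$\cdot x / y^{1-\sigma}$ after isolating the primes near $y$, which rearranges to the claimed bound $\Psi(x,y)\le \Psi(x,y)\le x^\sigma\zeta(\sigma,y)$ improved by the factor $1.39\,y^{1-\sigma}/\log x$.

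The main obstacle I anticipate is the bookkeeping around the weight $1/\log(x/n)$ and the truncation to $n\le x$: the natural lower bound for $\pi(x/n)$ degrades when $x/n$ is close to $y$, i.e. when $n$ is close to $x/y$, so one must either discard those $n$ (losing a manageable amount, since $\sum_{n>x/y}n^{-\sigma}$ is small relative to $\zeta(\sigma,y)$ in the regime $\sigma\ge 1/\log y$) or treat them separately. Getting the constant down to $1.39$ rather than something larger is the delicate part, and it will hinge on choosing the Chebyshev input sharply and on optimizing how aggressively one truncates; I would expect the argument to split into a range of $n$ where $x/n$ is large (main term, gives the $y^{1-\sigma}$ factor) and a short range near $n\approx x/y$ handled by a trivial estimate. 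The rest — the injectivity, the Euler-product manipulation of $\zeta(\sigma,y)$, and the final rearrangement — is routine.
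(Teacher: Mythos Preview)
Your chain of inequalities never produces $\Psi(x,y)$ on the side where it can be bounded above. The injection $(p,n)\mapsto pn$ indeed gives
\[
\sum_{\substack{n\le x\\P(n)\le y}}\bigl(\pi(x/n)-\pi(y)\bigr)^{+}\le x,
\]
and replacing each summand by a Chebyshev \emph{lower} bound, then by something like $c\,x^{\sigma}n^{-\sigma}/\log x$ via your inequality $1/n\ge n^{-\sigma}x^{\sigma-1}$, yields at best an inequality of the shape $c\,x^{\sigma}\zeta(\sigma,y)/\log x\lesssim x$. That is an upper bound on $\zeta(\sigma,y)$, not on $\Psi(x,y)$; the quantity $\Psi(x,y)$ is never isolated anywhere in your chain, and your Step~4 (``rearranges to the claimed bound'') simply does not follow. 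Note also that the terms with $n>x/y$ contribute nothing to your sum, so you cannot recover the full count $\Psi(x,y)$ from it by any uniform lower bound on the summands.

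The paper's argument is structurally different and puts $\Psi(x,y)$ in place from the outset via the identity
\[
\Psi(x,y)\log x=\sum_{\substack{n\le x\\P(n)\le y}}\bigl(\log n+\log(x/n)\bigr).
\]
The $\log n$ part is handled by $\log n=\sum_{d\mid n}\Lambda(d)$, which converts it into $\sum_{m}\sum_{p^k\le \min\{y,x/m\}}\log p$ and hence is bounded by $\pi(\min\{y,x/n\})\log(x/n)$. One then applies an \emph{upper} bound on $\pi$, namely $1+\pi(t)<1.39\,t/\log t$; this is the source of the constant $1.39$ (it is the maximum of $(1+\pi(t))\log t/t$, attained at $t=7$), not a Chebyshev lower bound as you suggest. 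Finally, the resulting two-range sum, with summands $x/n$ for $n>x/y$ and $y\log(x/n)/\log y$ for $n\le x/y$, is dominated termwise by $y^{1-\sigma}(x/n)^{\sigma}$ using that $t^{1-\sigma}$ is nondecreasing and $t^{-\sigma}\log t$ is nonincreasing for $t\ge y$ when $\sigma\ge1/\log y$. None of these ingredients appears in your proposal.
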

\begin{proof}
By the identity $\log n=\sum_{d|n}\Lambda(d)$, we have
\begin{align*}
\sum_{\substack{n\le x\\P(n)\le y}}\log n&=\sum_{\substack{m\le x\\P(m)\le y}}
\sum_{\substack{d\le x/m\\P(d)\le y}}\Lambda(d)
=\sum_{\substack{m\le x\\P(m)\le y}}\sum_{\substack{p\le\min\{y,x/m\}}}\log p\Big\lfloor\frac{\log(x/m)}{\log p}\Big\rfloor\\
&\le\sum_{\substack{m\le x\\P(m)\le y}}\pi\big(\min\{y,x/m\}\big)\log(x/m).
\end{align*}
Thus,
\[
\Psi(x,y)\log x=\sum_{\substack{n\le x\\P(n)\le y}}(\log n+\log(x/n))
\le\sum_{\substack{n\le x\\P(n)\le y}}\big(1+\pi(\min\{y,x/n\})\big)\log(x/n).
\]
Using the estimates in \cite{RS1} we see that the maximum of
$(1+\pi(t))/(t/\log t)$ occurs at $t=7$, so that
\[
1+\pi(t)<1.39t/\log t
\]
for all $t>1$.  The above estimate then gives
\[
\Psi(x,y)\log x<1.39\sum_{\substack{x/y<n\le x\\P(n)\le y}}x/n
+1.39\sum_{\substack{n\le x/y\\P(n)\le y}}y\log(x/n)/\log y.
\]
We now note that if $1/\log y\le\sigma\le1$, then
\[
y^{1-\sigma}(x/n)^\sigma\ge\begin{cases}
x/n,&\hbox{ if }x/y<n\le x,\\
y\log(x/n)/\log y,&\hbox{ if }n\le x/y.
\end{cases}
\]
Indeed, in the first case, 
since $t^{1-\sigma}$ is non-decreasing
in $t$, we have $(x/n)^{1-\sigma}\le y^{1-\sigma}$.  And in
the second case, 
since $t^{-\sigma}\log t$ is decreasing in $t$ for $t\ge y$,
we have $(x/n)^{-\sigma}\log(x/n)\le y^{-\sigma}\log y$.

We thus have
\[
\Psi(x,y)\log x<1.39\sum_{\substack{n\le x\\P(n)\le y}}y^{1-\sigma}(x/n)^\sigma
<1.39y^{1-\sigma}x^\sigma\zeta(\sigma,y).
\]
This completes the proof.
\end{proof}

\section*{Acknowledgments}
We warmly thank Jan B\"uthe, Anne Gelb, Habiba Kadiri, Dave Platt, 
Brad Rodgers, Jon Sorenson, Tim Trudgian, and
John Voight for their interest and help.  We are also very appreciative of
Andrew Granville and Kannan Soundararajan for allowing us to include
their elementary upper bound prior to the publication of their book.
The first author was partially supported by a Byrne Scholarship
at Dartmouth.  The second author was partially supported by NSF grant number DMS-1440140
while in residence at the Mathematical Sciences Research Institute in Berkeley.

\bibliographystyle{amsplain}

\end{document}